\newtheorem{theorem}{Theorem}[section]
\newtheorem{corollary}[theorem]{Corollary}
\newtheorem{proposition}[theorem]{Proposition}
\newtheorem{remark}[theorem]{Remark}
\newtheorem{lemma}[theorem]{Lemma}
\newtheorem{definition}[theorem]{Definition}
\DeclareMathOperator{\supp}{supp}
\DeclareMathOperator{\card}{card}
\begin{document}

\title{Quantization of Gaussian measures with R\'enyi-$\alpha$-entropy constraints}
\author{Wolfgang Kreitmeier 
\footnote{\noindent 
Department of Informatics and Mathematics, \newline
University of Passau, 94032 Passau, Germany \newline e-mail: 
wolfgang.kreitmeier@uni-passau.de \newline
phone:  \texttt{+49(0)851/509-3014} \newline
This research was supported 
by a grant from the German Research Foundation (DFG). 
} 
} 

\date{}
\maketitle

\begin{abstract}
We consider the optimal quantization problem 
with R\'enyi-$\alpha$-entropy constraints 
for centered Gaussian measures on a separable Banach space.
For $\alpha = \infty$ we
can compute the optimal quantization
error by a moment on a ball.
For $\alpha \in {} ]1,\infty]$ and large entropy bound we derive sharp asymptotics for the
optimal quantization error in terms of the 
small ball probability of the Gaussian measure.
We apply our results to several classes of Gaussian measures.
The asymptotical order of the optimal quantization error for $\alpha > 1$ is different from
the well-known cases $\alpha = 0$ and $\alpha = 1$. 
\end{abstract}

~\\
\noindent
\textbf{Keywords} Gaussian measures, R\'enyi-$\alpha$-entropy, functional quantization, 
high-resolution quantization. \newline
~\\
\textbf{AMS Subject Classification (1991):} 60G15, 62E17, 94A17 \\

\pagestyle{myheadings}
\markboth{W. Kreitmeier}{Quantization of Gaussian measures}

\section{Introduction and basic notation}

Let $\mathbb{N} := \{1,2,.. \}$. Let $\alpha \in \thinspace [0, \infty ]$ and 
$p=(p_{1},p_{2},...) \in [0,1]^{\mathbb{N}}$ be a probability vector,
i.e. $\sum_{i=1}^{\infty} p_{i} = 1$. 
The R\'enyi-$\alpha$-entropy 
$\hat{H}^{\alpha}(p) \in {} [0,\infty ]$
is defined as (see e.g. \cite[Definition 5.2.35]{ref_bib_Aczel} resp. \cite[Chapter 1.2.1]{ref_bib_Behara})  

\begin{equation}
\label{renyi_entr}
\hat{H}^{\alpha}(p)=\left\{ 
\begin{tabular}{l}
$- \sum_{i=1}^{\infty } p_{i} \log (p_{i}),$ if $\alpha = 1$ \smallskip \\ 
$- \log \left( \max \{ p_{i} : i \in \mathbb{N} \} \right)$, if $\alpha = \infty$ \smallskip \\
$\frac{1}{1-\alpha} \log \left(  \sum_{i=1}^{\infty } p_{i}^{\alpha} \right),$ 
if $\alpha \in {} [0,\infty[ \backslash \{1\}$.
\end{tabular}
\right.
\nonumber
\end{equation}
~\\
We use the convention $0 \cdot \log (0) :=0$ and $0^{x}:=0$ for all real $x$. 
The logarithm $\log$ is based on $e$.
\begin{remark}
\label{ref_rema_hospital}
With these conventions we obtain 
\[
\hat{H}^{0}(p) = \log \left(  \card \{ p_{i} : i \in \mathbb{N}, p_{i} > 0 \} \right),
\]
if $\card$ denotes cardinality.
Using the rule of de l'Hospital it is easy to see, that 
\[
\lim_{\alpha \rightarrow 1 \atop \alpha \neq 1 } \hat{H}^{\alpha}( \cdot ) = \hat{H}^{1}( \cdot )
\]
(cf. \cite[Remark 5.2.34]{ref_bib_Aczel}).
Moreover, $\lim_{\alpha \rightarrow \infty } \hat{H}^{\alpha}( \cdot ) = \hat{H}^{\infty}( \cdot )$.
\end{remark}

Let $(E, \| \cdot \|)$ be a real separable 
Banach space with norm $\| \cdot \| $.
Let $\mu$ be a Borel probability measure on $E$.
Denote by $\mathcal{F}$ the set of all
Borel-measurable mappings $f : E \rightarrow E$ with
$\card(f(E)) \leq \card (\mathbb{N})$.
A mapping $f \in \mathcal{F}$ is called \textbf{quantizer} and the image 
$f(E)$ is called codebook consisting of codepoints.
We assume throughout the whole paper that the codepoints are distinct. 
Every quantizer $f$ induces a partition
$\{ f^{-1}(z) : z \in f(E)  \}$ of $E$.
Every element of this partition is called \textbf{codecell}.
The image measure $\mu \circ f^{-1}$ has a countable support and defines an approximation of $\mu$, 
the so-called quantization of $\mu$ by $f$. 
For any enumeration $\{ z_{1},z_{2},.. \}$ of $f(E)$ we define
\[
H^{\alpha}_{\mu }(f) = \hat{H}^{\alpha } ( ( \mu \circ f^{-1} ( z_{1} ), \mu \circ f^{-1} ( z_{2} ),...) )
\]
as the R\'enyi-$\alpha$-entropy of $f$ w.r.t $\mu$.
Now we intend to quantify the distance between $\mu$ and its approximation under $f$.
To this end let  
$\rho: [0,\infty[ \rightarrow [0,\infty[$ be a 
surjective, strictly increasing and continuous 
mapping with $\rho(0)=0$. Hence $\rho$ is invertible. The inverse function is denoted by $\rho^{-1}$
and also strictly increasing.
We assume throughout the whole paper that $\int \rho ( \| x \| ) d \mu (x) < \infty $.
For $f \in \mathcal{F}$ we define as distance between $\mu$ and $\mu \circ f^{-1}$ 
the quantization error 
\begin{equation}
\nonumber
\label{ref_dmuf}
D_{\mu , \rho }(f)= \int \rho ( \parallel x-f(x)\parallel ) d \mu (x).
\end{equation} 
For any $R \geq 0$ we denote by 
\begin{equation}
\label{ref_darst_f_q}
D_{\mu , \rho}^{\alpha }(R)= \inf \{ D_{\mu , \rho}(f) : f \in \mathcal{F}, H^{\alpha }_{\mu }(f) \leq R \}
\end{equation}
the optimal quantization error for $\mu$ under R\'enyi-$\alpha$-entropy bound $R$.
Indeed, it is justified to speak of a distance.
It was shown by the author \cite{ref_bib_kreit4} in the finite-dimensional case and for 
Euclidean norm that for a large class of distributions $\mu$
the optimal quantization error (\ref{ref_darst_f_q}) is equal to 
a Wasserstein distance. 

\begin{remark}
\label{ref_rem_opt_quant}
The optimal quantization error is decreasing in $\alpha \geq 0$. To
see this let $f \in \mathcal{F}$ with $H_{\mu}^{\alpha }(f) \leq R$. 
For arbitrary $0< \gamma \leq \beta < \infty$ we 
have (cf. \cite{ref_bib_beck}, p. 53)
\[
H_{\mu}^{\beta }(f) \leq H_{\mu}^{\gamma }(f).
\]
Together with Remark \ref{ref_rema_hospital} we conclude that 
$H_{\mu}^{\alpha }(f) \leq H_{\mu}^{0 }(f)$. In view of Definition (\ref{ref_darst_f_q})
we thus obtain 
\[
D_{\mu, \rho }^{\alpha }(R) \leq \inf \{ D_{\mu, \rho }(g) : 
g \in \mathcal{F}, H^{0}_{\mu }(g) \leq R \} = D_{\mu,  \rho }^{0}(R). 
\]
\end{remark}

An exact determination of the optimal quantization error  
(\ref{ref_darst_f_q}) for every $R \geq 0$ 
was successfully only in a few special cases so far.
In this regard most is known in the one-dimensional case 
under the restriction 
\begin{equation}
\label{def_dist_f}
\rho (x) = x^{r} \text{ with } r > 0
\end{equation}
and $\alpha \in \{0,1\}$. In case of $\alpha = 0$ 
the reader is referred to \cite[section 5.2]{ref_bib_Graf_Luschgy_1}.
To the author's knowledge the uniform and the exponential distribution  
are the only examples for $\alpha = 1$ where an exact determination of
the optimal quantization error was carried out so far.
Gy\"{o}rgy and Linder \cite{ref_bib_Gyo00} have determined a parametric representation
of (\ref{ref_darst_f_q}) for the uniform distribution and a large class of 
distance functions $\rho$ which includes (\ref{def_dist_f}).
Berger \cite{ref_bib_Ber72} has derived in case of $\alpha = 1$ and $r=2$ 
an analytical representation for the optimal quantization error of the exponential distribution.
For the class (\ref{def_dist_f}) of distance functions the author \cite{ref_bib_kreit2} was able to
generalize the results of Gy\"{o}rgy and Linder \cite{ref_bib_Gyo00} 
to the case $\alpha \in [0, \infty ]$.  

Due to the difficulties in determining 
the optimal quantization error 
one is interested in asymptotics for the error for large entropy bounds.
In case of $\alpha \in \{ 0, 1 \}$ and finite dimension the asymptotical behaviour of the
optimal quantization error is well-known for a large class of distributions, see e.g.
\cite{ref_bib_Graf_Luschgy_1, ref_bib_Gray}.
Kreitmeier and Linder \cite{ref_bib_kreit5} have derived also sharp asymptotics for a large
class of one-dimensional distributions and $\alpha \in [0 , \infty]$. Moreover, the author
\cite{ref_bib_kreit3} has determined first-order asymptotics for the optimal quantization error 
in arbitrary finite dimension and $\alpha \in [0, \infty]$, where the class of distributions
is larger than the one in \cite{ref_bib_kreit5}.

This paper aims to determine asymptotics for the optimal quantization error (\ref{ref_darst_f_q})
in the infinite dimensional case.
To this end we will assume for the rest of this paper that $(E, \| \cdot \|)$ is 
of infinite dimension. Moreover, we restrict ourselves to Gaussian measures. In more detail 
we will assume from now on that $\mu$ 
is a non-atomic centered Gaussian measure on $E$ and the support of $\mu$
coincides with $E$. 
The restriction to Gaussian measures is motivated by different reasons. 
First, this class of distributions has been extensively studied 
in the past. In the proofs of this paper we especially use concentration inequalities 
(cf. \cite{ref_bib_bogachev}) and small ball asymptotics 
(see e.g. \cite{ref_bib_bel, ref_bib_bronski, 
ref_bib_chen, ref_bib_fill, ref_bib_li2, ref_bib_mas, ref_bib_shao, ref_bib_tala}).
Secondly, for distance functions of type (\ref{def_dist_f}) and $\alpha \in \{0,1\}$
the asymptotical order of $D_{\mu , \rho}^{\alpha }(R)$
for large $R$ has been already determined for several classes of 
Gaussian measures. 
Dereich et al. \cite{ref_bib_der} have determined asymptotics for 
(\ref{ref_darst_f_q}) in case of $\alpha = 0$ and for distance functions of type
(\ref{def_dist_f}). Their results require weak conditions
on the regular variation of the small ball asymptotics of the Gaussian measure.
Graf, Luschgy and Pag\`es \cite{ref_bib_Graf_Luschgy_3} have additionally shown 
for $\alpha = 0$ and restriction (\ref{def_dist_f})
that one can determine the small ball asymptotics 
from the asymptotics of the optimal quantization error (\ref{ref_darst_f_q})
if the asymptotics of (\ref{ref_darst_f_q}) satisfy certain regularity conditions. 
Luschgy and Pag\`es \cite{ref_bib_luschg} have determined sharp error asymptotics for $\alpha = 0$
and distance function $\rho(x)=x^2$. They imposed a condition on the regularity of
the eigenvalues of the covariance operator of $\mu$. 
In this situation, also the sharp error asymptotics for $\alpha = 0$ and $\alpha = 1$
coincide, cf. \cite{ref_bib_Graf_Luschgy_4}. 
Dereich and Scheutzow \cite{ref_bib_dereich} have shown for fractional
Brownian motion that sharp asymptotics of (\ref{ref_darst_f_q}) for large $R$ exist   
and also coincide for $\alpha \in \{ 0,1 \}$. 
According to these cited works the asymptotics for $\alpha = 0$
and $\alpha = 1$ are of the same order and in view of 
Remark \ref{ref_rem_opt_quant} even for all $\alpha \in [0,1]$.

The objective of this paper is to analyze the optimal quantization error for $\alpha > 1$. 
In Section 2 we determine 
in case of $\alpha = \infty$ ('mass-constrained quantization') 
a representation of the optimal quantization
error by a moment on a ball (cf. Proposition \ref{ref_prop_ball_repres}).
The proof of this result is a straightforward generalization of the techniques 
used in the proof of \cite[Proposition 2.1.]{ref_bib_kreit3}. 
In Section 3, for a large class of Gaussian measures where sharp asymptotics for the small ball probability are
known, we can determine sharp asymptotics for the optimal quantization error with entropy parameter $\alpha >1$
(cf. Corollary \ref{ref_coro_mappsdfre}, Theorem \ref{ref_prop_asym_sh}).
The cornerstone of our approach is covered by Proposition \ref{prop_first_part}.
For distance functions of type (\ref{def_dist_f}) we 
obtain a representation of the sharp asymptotics for $D_{\mu , \rho}^{\infty }$
in terms of the inverse of the small ball function (cf. definition (\ref{sm_b_fct})).
The condition imposed (cf. (\ref{ref_assump})) on the small ball asymptotics is satisfied by most 
prevalent Gaussian measures. For those distributions we are then able to derive 
also sharp asymptotics for all $\alpha > 1$, cf. Corollary \ref{ref_coro_mappsdfrexx}.
In Section 4 we discuss several examples of Gaussian processes
in order to determine the asymptotical order of the optimal quantization error
for large entropy bound and $\alpha > 1$.
The asymptotics of the optimal quantization error for $\alpha > 1$ turns out to be 
of different order compared to
the case $\alpha \leq 1$.

\section{The optimal quantization error under mass-constraints}

Let $f \in \mathcal{F}$ and $R>0$ with $H_{\mu}^{\infty }(f) \leq R$. From the definition 
we obtain
\[
\max \{ \mu ( f^{-1}(a) ) : a \in f(E) \}  \geq e^{-R} .
\]
Hence we call optimal quantization with $\alpha = \infty$ mass-constrained quantization. 
Denote by $\mathbb{R}$ all real numbers,  
let 
\[
\mathbb{R}^{+} = \{ x \in \mathbb{R} : x > 0 \} \text{ and }
\mathbb{R}_{0}^{+} = \{ x \in \mathbb{R} : x \geq 0 \} .
\]
As a key tool we will use Anderson's inequality \cite{ref_bib_and} as stated in 
reference \cite{ref_bib_bogachev}.

\begin{theorem}[$\text{\cite[Corollary 4.2.3]{ref_bib_bogachev}}$]
\label{anderson}
If $A$ is a convex, symmetric and Borel-measurable subset of $E$, then for every $a \in E$
\[
\mu ( A ) \geq \mu (A + a) .
\] 
Moreover, the function 
\[
\mathbb{R} \ni t \to \int_{E} g(x + t a) \mu (dx) 
\]
is nondecreasing on $\mathbb{R}_{0}^{+}$,
provided $g : E \to \mathbb{R}$ is such that the sets \\
$\{ g \leq c \}, {} c \in \mathbb{R}$, are
symmetric and convex, and $g( \cdot + ta )$ is $\mu-$integrable for any $t \geq 0$.  
\end{theorem}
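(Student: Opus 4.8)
The plan is to reduce both assertions to the single geometric statement $(\star)$: for every convex, symmetric, Borel set $C\subseteq E$ the function $t\mapsto\mu(C+ta)$ is nonincreasing on $\mathbb{R}_{0}^{+}$; and then to prove $(\star)$ by Brunn--Minkowski together with a finite-dimensional approximation. Granting $(\star)$, the first assertion is the inequality $\mu(A+a)\le\mu(A)$, which is $(\star)$ evaluated at $t=1$ versus $t=0$ with $C=A$. For the monotonicity of $\varphi(t):=\int_{E}g(x+ta)\,\mu(dx)$, note first that if every sublevel set $\{g\le c\}$ is convex and symmetric then $g$ attains its infimum at the origin (a nonempty convex symmetric set contains $0$), so after subtracting the constant $g(0)$, which only shifts $\varphi$ by a constant, we may assume $g\ge 0$. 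For $g\ge 0$ the layer-cake representation and Tonelli's theorem give
\[
\varphi(t)=\int_{0}^{\infty}\mu\bigl(\{x:g(x+ta)>c\}\bigr)\,dc=\int_{0}^{\infty}\bigl(1-\mu(\{g\le c\}+ta)\bigr)\,dc,
\]
where the symmetry of the convex set $\{g\le c\}$ was used to replace the translate by $-ta$ with the one by $+ta$. By $(\star)$ each integrand is nondecreasing in $t$, hence so is $\varphi$; the assumed $\mu$-integrability of $g(\cdot+ta)$ is exactly what makes $\varphi$ finite and these manipulations legitimate.

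Next I would establish $(\star)$ in finite dimensions via the Brunn--Minkowski inequality. After a linear change of variables one may take $\mu$ to be the standard Gaussian on $\mathbb{R}^{n}$, whose density $p$ has superlevel sets $\{p\ge u\}$ equal to centred Euclidean balls $B_{r(u)}$. Writing $\mu(C+ta)=\int_{0}^{\|p\|_{\infty}}|(C+ta)\cap B_{r(u)}|\,du$, it is enough to show that for every centred ball $B$ the function $\psi(t):=|(C+ta)\cap B|^{1/n}$ is nonincreasing on $\mathbb{R}_{0}^{+}$. It is even in $t$ because $C$ and $B$ are symmetric, and it is concave on $\mathbb{R}$ because the section $(C+ta)\cap B$ is affine in $t$ in the Minkowski sense --- one has $\lambda\bigl((C+sa)\cap B\bigr)+(1-\lambda)\bigl((C+ta)\cap B\bigr)\subseteq\bigl(C+(\lambda s+(1-\lambda)t)a\bigr)\cap B$ by the convexity of $C$ and of $B$ --- so Brunn--Minkowski yields the concavity of $\psi$. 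An even concave function on $\mathbb{R}$ is nonincreasing on $\mathbb{R}_{0}^{+}$, and integration over $u$ gives $(\star)$ in dimension $n$. (Equivalently one can run the finite-dimensional step through the Pr\'ekopa--Leindler inequality: $x\mapsto\mu(C+x)$ is the convolution of the log-concave functions $\mathbf{1}_{C}$ and $p$, hence even and log-concave, so $t\mapsto\mu(C+ta)$ is even and log-concave and therefore nonincreasing on $\mathbb{R}_{0}^{+}$.)

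Finally I would pass to infinite dimensions. A cylindrical set $C=\{x:(l_{1}(x),\dots,l_{n}(x))\in B\}$ with $l_{i}\in E^{*}$ and $B\subseteq\mathbb{R}^{n}$ convex and symmetric satisfies $\mu(C+ta)=\nu\bigl(B+t(l_{1}(a),\dots,l_{n}(a))\bigr)$, where $\nu$ is the centred Gaussian image of $\mu$ under $x\mapsto(l_{1}(x),\dots,l_{n}(x))$, so $(\star)$ for such $C$ follows from the finite-dimensional case. The family of convex symmetric Borel sets for which $(\star)$ holds is closed under increasing and decreasing sequential limits, since the limit of a monotone sequence of convex symmetric sets is again convex and symmetric, translation commutes with intersections, and $\mu$ is continuous along monotone sequences; hence it contains every closed convex symmetric set that is a countable intersection of cylinders, and by separability of $E$ --- using a sequence $(l_{k})\subseteq E^{*}$ with $\sup_{k}|l_{k}(x)|=\|x\|$ --- this already covers the norm balls. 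For an arbitrary convex symmetric Borel set one invokes the standard approximation of convex sets by cylinders for Gaussian measures on separable spaces together with the fact that the boundary of a convex set is $\mu$-null for a non-degenerate Gaussian, so that $C$ may be replaced by its interior or closure.

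The finite-dimensional Brunn--Minkowski step is the classical core of Anderson's inequality and is routine. The part I expect to demand the most care is the passage to infinite dimensions for a general convex symmetric Borel set --- making the cylinder approximation precise and justifying that $\mu$ charges no boundary of a convex set --- together with the bookkeeping that ensures the integrability hypothesis really does license the layer-cake interchange in the second assertion.
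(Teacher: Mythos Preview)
The paper does not prove this statement at all: it is quoted verbatim as \cite[Corollary 4.2.3]{ref_bib_bogachev} and used as a black box. There is therefore no ``paper's own proof'' to compare your proposal against.

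Your outline is a reasonable sketch of the classical proof of Anderson's inequality. The reduction to $(\star)$ via the layer-cake formula is standard and your justification that $g$ attains its minimum at $0$ is correct. The finite-dimensional step via Brunn--Minkowski (or, more cleanly, via log-concavity of the convolution $\mathbf{1}_C * p$ using Pr\'ekopa--Leindler) is exactly how the result is usually proved. You are right to flag the passage to infinite dimensions as the place requiring the most work: the cylinder approximation handles closed convex symmetric sets without difficulty, but to get every Borel convex symmetric set one typically argues that for a Radon Gaussian measure any convex Borel set agrees, up to a $\mu$-null set, with its closure (this uses that the topological boundary of a convex set is $\mu$-null, which in turn relies on $\mu$ being non-atomic and the zero--one law for convex sets). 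Your proposal gestures at this but does not carry it out; if you actually want a self-contained proof rather than a citation, that is the step to write carefully. For the purposes of the present paper, however, the citation to Bogachev is all that is needed.
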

 
We denote by $\supp (\mu )$ the support of $\mu$. For $a \in E$ and $s>0$ we denote by
\[
B(a, s) = \{ x \in E : \| x - a \| \leq s \}
\]
the closed ball around $a$ with radius $s$.  
We deduce from \cite[Corollary 4.4.2 (i)]{ref_bib_bogachev} that
the mapping
\begin{equation}
\label{ref_equ_fa}
\mathbb{R}^{+} \ni t \stackrel {F_{a}} { \rightarrow } \mu ( B( a,t ) ) \in \mathbb{R}^{+}
\end{equation}
is continuous. Because $\mu$ is non-atomic the mapping $F_{a}$ has a continuous extension to 
$\mathbb{R}_{0}^{+}$ which we call also $F_{a}$ and $F_{a}(0)=0$. 
For any set $A \subset E$ we denote by $1_{A}$ the characteristic function on $A$.

\begin{lemma}
\label{ref_lemm_abs0}
Let $a \in E$ and $A \subset E$ be a Borel measurable set with
$\mu (A) \in {} ]0,1[$. Then there exists an $s \in {} ]0, \infty [$ such that $\mu (A) = \mu (B(0,s))$ and
\[
\int_{A} \rho ( \| x - a \| ) d \mu (x)  
\geq \int_{B(0,s)} \rho ( \| x  \| ) d \mu (x) . 
\]
\end{lemma}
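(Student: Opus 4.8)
The plan is to show that among all Borel sets of a given mass, the centered ball minimizes the integral $\int_A \rho(\|x-a\|)\,d\mu(x)$, and then to produce the claimed radius $s$ by a continuity argument. First I would use the continuity and monotonicity of the function $F_0$ from \eqref{ref_equ_fa}: since $\mu$ is non-atomic, $F_0$ is continuous on $\mathbb{R}_0^+$ with $F_0(0)=0$, and because $\supp(\mu)=E$ it is strictly increasing and $F_0(t)\to 1$ as $t\to\infty$. Hence for the given value $\mu(A)\in\,]0,1[$ there is a unique $s\in\,]0,\infty[$ with $F_0(s)=\mu(B(0,s))=\mu(A)$. This settles the existence of $s$; the substance of the lemma is the inequality.

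For the inequality I would first reduce to the centered case $a=0$. The set $\{x:\rho(\|x-a\|)\le c\}=B(a,\rho^{-1}(c))$ is convex and symmetric about $a$, so the integrand $g(x)=\rho(\|x-a\|)$ has convex, symmetric sublevel sets in the sense required by Theorem \ref{anderson} (applied with the translation by $-a$, i.e.\ writing $g(x)=\rho(\|x-(-(-a))\|)$). Applying the monotonicity part of Anderson's inequality to $t\mapsto\int g(x+t(-a))\,d\mu(x)$, evaluated at $t=0$ versus $t=1$ after the change of variables, gives
\[
\int_A \rho(\|x-a\|)\,d\mu(x)\ \ge\ \int_{A} \rho(\|x\|)\,d\mu(x)
\]
once one is careful that the set $A$ is also shifted; more precisely one compares $\int_{A}\rho(\|x-a\|)\,d\mu(x)$ with $\int_{A-a}\rho(\|x\|)\,d\mu(x)$ and then argues that replacing $A-a$ by a centered ball of the same mass only decreases the integral. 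So it suffices to prove: if $\mu(A)=\mu(B(0,s))$ then $\int_A\rho(\|x\|)\,d\mu(x)\ge\int_{B(0,s)}\rho(\|x\|)\,d\mu(x)$.

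This last step is a ``bathtub''-type rearrangement. Write $h(x)=\rho(\|x\|)$, which is nonnegative with $\{h\le c\}=B(0,\rho^{-1}(c))$. Using the layer-cake formula $h(x)=\int_0^\infty 1_{\{h(x)>c\}}\,dc$ and Fubini,
\[
\int_A h\,d\mu=\int_0^\infty \mu\bigl(A\cap\{h>c\}\bigr)\,dc,\qquad
\int_{B(0,s)} h\,d\mu=\int_0^\infty \mu\bigl(B(0,s)\cap\{h>c\}\bigr)\,dc.
\]
Since $\{h>c\}$ is the complement of the centered ball $B(0,\rho^{-1}(c))$, one has, for each $c$, that $\mu(A\cap\{h>c\})-\mu(B(0,s)\cap\{h>c\})$ equals $\mu(A\setminus B(0,\rho^{-1}(c)))-\mu(B(0,s)\setminus B(0,\rho^{-1}(c)))$, and using $\mu(A)=\mu(B(0,s))$ this difference equals $\mu(B(0,s)\cap B(0,\rho^{-1}(c)))-\mu(A\cap B(0,\rho^{-1}(c)))\ge 0$ when $\rho^{-1}(c)\le s$ (because then $B(0,\rho^{-1}(c))\subset B(0,s)$ forces $\mu(A\cap B(0,\rho^{-1}(c)))\le\mu(B(0,\rho^{-1}(c)))=\mu(B(0,s)\cap B(0,\rho^{-1}(c)))$), and is likewise $\ge 0$ when $\rho^{-1}(c)> s$ by a symmetric computation on complements. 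Integrating over $c$ gives the desired inequality.

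The main obstacle is the bookkeeping in the reduction to $a=0$: Anderson's inequality as stated controls $\int g(x+ta)\,d\mu(x)$ over all of $E$, not over a fixed measurable set $A$, so one cannot apply it directly to $\int_A$. The clean route is to do the rearrangement step for centered $h$ first (which is purely measure-theoretic and needs only that centered balls are $F_0$-sublevel sets), and then handle the shift by $-a$ by comparing $\int_{B(0,s)}\rho(\|x-a\|)\,d\mu$ with $\int_{B(0,s)}\rho(\|x\|)\,d\mu$ via the monotonicity clause of Theorem \ref{anderson} applied to $g(x)=\rho(\|x\|)1_{B(0,s)}(x)+\infty\cdot 1_{B(0,s)^c}(x)$—or, more simply, by noting $\int_A\rho(\|x-a\|)\,d\mu(x)\ge\int_{B(0,s')}\rho(\|x\|)\,d\mu(x)$ where $s'$ has $\mu(B(0,s'))=\mu(A)$, which forces $s'=s$ by strict monotonicity of $F_0$. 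I would assemble these pieces in that order.
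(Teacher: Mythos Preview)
Your layer-cake argument for the centered case and the existence of $s$ via the continuity of $F_0$ are both fine. The gap is the reduction from general $a$ to $a=0$: none of the three routes you sketch actually closes. Comparing $\int_A \rho(\|x-a\|)\,d\mu$ with $\int_{A-a}\rho(\|x\|)\,d\mu$ is unfounded because $\mu$ is not translation-invariant (the substitution $y=x-a$ replaces $d\mu(x)$ by the \emph{translated} measure, not by $d\mu(y)$), and anyway $\mu(A-a)\neq\mu(A)$ in general, so the subsequent rearrangement would produce the wrong radius. The function $g=\rho(\|\cdot\|)\,1_{B(0,s)}+\infty\cdot 1_{B(0,s)^c}$ fails the integrability hypothesis of Theorem~\ref{anderson}. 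And the ``more simply'' clause is circular---it just restates the lemma.

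The paper reverses your order of operations: it first runs the rearrangement with respect to the \emph{shifted} integrand $\rho(\|x-a\|)$, whose sublevel sets are balls $B(a,r)$, obtaining $\int_A\rho(\|x-a\|)\,d\mu\ge\int_{B(a,l)}\rho(\|x-a\|)\,d\mu$ with $\mu(B(a,l))=\mu(A)$; only afterwards does it recenter via Anderson's inequality. In fact your own layer-cake computation can absorb both steps at once if you keep the shift on the $A$-side rather than trying to remove it first: with $r=\rho^{-1}(c)$ the integrand of the difference is
\[
\mu\bigl(B(0,\min(s,r))\bigr)-\mu\bigl(A\cap B(a,r)\bigr),
\]
and this is nonnegative because $\mu(A\cap B(a,r))\le\min\bigl(\mu(A),\mu(B(a,r))\bigr)\le\min\bigl(\mu(B(0,s)),\mu(B(0,r))\bigr)=\mu\bigl(B(0,\min(s,r))\bigr)$, where the second inequality uses only the set form $\mu(B(a,r))\le\mu(B(0,r))$ of Theorem~\ref{anderson}. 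This single layer-cake pass avoids the delicate recentering step altogether.
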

\begin{proof} ~\\
1. $\int_{A} \rho ( \| x - a \| ) d \mu (x) \geq \int_{B(a,l)} \rho ( \| x - a \| ) d \mu (x)$ with
$\mu ( B(a,l) ) = \mu (A)$. \smallskip \\
By the properties of the mapping $F_{a}$ an $l > 0$ exists 
with $\mu ( B(a,l) ) = \mu (A)$.
The remaining part of the proof can be taken from the proof of \cite[Lemma 2.8]{ref_bib_Graf_Luschgy_1}. 
Although \cite[Lemma 2.8]{ref_bib_Graf_Luschgy_1} covers only the special case $\rho (x) = x^{r}$,
the argument works also for general $\rho$. \smallskip \\
2. $\int_{B(a,l)} \rho ( \| x - a \| ) d \mu (x) \geq \int_{B(0,s)} \rho ( \| x \| ) d \mu (x)$. \smallskip \\
Let
\[
E \ni x \rightarrow f(x) = 1_{B(0,s)}(x) \rho ( \| x \| ) .
\]
By Theorem \ref{anderson}
an $s>0$ exists such that 
\[
\mu ( B( 0,s ) ) = \mu ( B( a,l ) ) \leq \mu ( B( 0,l ) ),
\]
which yields $s \leq l$. For every $c \in [0, \infty [$ the set 
\[
B(0,s) \cap \{ f \leq c \} = \{ x \in B(0,s) : \rho ( \| x \| ) \leq c \}  =  B ( 0, \min ( s, \rho^{-1}(c) ) )
\]
is symmetric and convex. 
Moreover $f( \cdot + t (-a))$ is $\mu$-integrable 
for every $t \geq 0$. Because the support of $f$ is a subset of $B( 0,s )$ 
we obtain from Theorem \ref{anderson} that
\begin{eqnarray*}
\int_{B(a,l)} \rho ( \| x - a \| ) d \mu (x) & \geq &
\int_{B(a,s)} \rho ( \| x - a \| ) d \mu (x) \\
&=& \int f(x-a) d \mu (x) \geq \int f(x) d \mu (x)
\\ &=& \int_{B(0,s)} \rho ( \| x \| ) d \mu (x) .
\end{eqnarray*}
\end{proof}

The proof of the following Proposition \ref{ref_prop_ball_repres} is
an obvious generalization of the proof of \cite[Proposition 2.1.]{ref_bib_kreit3} 
to the finite-dimensional case.
The main idea of constructing a quantizer based on a countable partition of $E$ works
also for infinite dimensional separable Banach spaces.
For the reader's convenience we provide a complete proof.

\begin{proposition}
\label{ref_prop_ball_repres}
Let $R>0$ and $s>0$ such that $\mu ( B(0,s) ) = e^{-R}$. Then 
\begin{equation}
\label{ref_equ_darstll}
D_{\mu , \rho }^{\infty }(R) = \int_{B(0,s)} \rho ( \| x \| ) d \mu (x).
\end{equation}
\end{proposition}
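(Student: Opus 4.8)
The plan is to establish (\ref{ref_equ_darstll}) by proving two inequalities. For the lower bound $D_{\mu,\rho}^{\infty}(R) \geq \int_{B(0,s)} \rho(\|x\|)\, d\mu(x)$, I would take an arbitrary $f \in \mathcal{F}$ with $H_{\mu}^{\infty}(f) \leq R$ and show its error is at least the right-hand side. Write $f(E) = \{z_1, z_2, \ldots\}$ and let $A_i = f^{-1}(z_i)$, so $\{A_i\}$ is a measurable partition of $E$ with $\mu(A_i) \leq e^{-R}$ for every $i$ (this is the mass constraint, as noted just before Theorem \ref{anderson}). The idea is to invoke Lemma \ref{ref_lemm_abs0} on each codecell: for each $i$ with $\mu(A_i) \in {}]0,1[$ there is $s_i > 0$ with $\mu(A_i) = \mu(B(0,s_i))$ and $\int_{A_i} \rho(\|x - z_i\|)\, d\mu \geq \int_{B(0,s_i)} \rho(\|x\|)\, d\mu$. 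Summing over $i$ gives $D_{\mu,\rho}(f) \geq \sum_i \int_{B(0,s_i)} \rho(\|x\|)\, d\mu(x)$, and the masses $\mu(B(0,s_i)) = \mu(A_i)$ sum to $1$ with each $\leq e^{-R} = \mu(B(0,s))$. The remaining point is a rearrangement/extremality argument: among all ways of covering total mass $1$ by concentric balls each of mass $\leq \mu(B(0,s))$, the configuration minimizing the sum of the "ball moments" is the one using balls as large as possible, i.e. all equal to $B(0,s)$; concretely, since $t \mapsto \int_{B(0,t)} \rho(\|x\|)\, d\mu$ is concave-like in the mass variable $m = F_0(t)$ (the integrand $\rho(\|x\|)$ is increasing in $\|x\|$, so the "marginal cost" of mass at radius $t$ is increasing in $t$, hence in $m$), splitting a ball of mass $m$ into smaller pieces can only decrease the total, and Jensen/convexity forces the minimum at the boundary where all pieces have the maximal allowed mass $e^{-R}$. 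This yields $D_{\mu,\rho}(f) \geq \int_{B(0,s)} \rho(\|x\|)\, d\mu(x)$, and taking the infimum over $f$ gives the lower bound.

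For the upper bound $D_{\mu,\rho}^{\infty}(R) \leq \int_{B(0,s)} \rho(\|x\|)\, d\mu(x)$, I would construct an explicit quantizer. Because $\mu(B(0,s)) = e^{-R} > 0$ and $\mu$ is a non-atomic Borel measure on a separable Banach space, one can tile $E$ by a countable collection of disjoint translated balls (or more precisely, measurable sets each of measure exactly $e^{-R}$, congruent via translation to $B(0,s)$ up to null sets): pick a sequence of centers $a_1, a_2, \ldots$ and measurable cells $C_i$ with $C_i \subseteq B(a_i, s)$ (shrunk to avoid overlaps), $\bigcup_i C_i = E$ up to a $\mu$-null set, and $\mu(C_i) = e^{-R}$ for all $i$ — separability guarantees such a countable cover exists, and non-atomicity lets us trim each cell to exactly the right mass. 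Define $f$ by $f(x) = a_i$ for $x \in C_i$. Then $H_{\mu}^{\infty}(f) = -\log(\max_i \mu(C_i)) = R$, and
\begin{equation*}
D_{\mu,\rho}(f) = \sum_i \int_{C_i} \rho(\|x - a_i\|)\, d\mu(x) \leq \sum_i \int_{B(a_i,s)} \rho(\|x - a_i\|)\, d\mu(x);
\end{equation*}
here I must be careful because $\mu(C_i) = \mu(B(0,s))$ but $C_i \subseteq B(a_i,s)$ need not have $\mu(C_i) = \mu(B(a_i,s))$. Instead I would bound directly: on $C_i$ we have $\|x - a_i\| \leq s$, so $\rho(\|x-a_i\|) \leq \rho(s)$ is not tight enough; the cleaner route is to choose $C_i$ itself as the cell and apply Anderson's inequality in the reverse direction via Lemma \ref{ref_lemm_abs0}'s philosophy, or — simplest — to invoke that $\int_{C_i}\rho(\|x-a_i\|)\,d\mu(x) \leq \int_{B(0,s)}\rho(\|x\|)\,d\mu(x)$ because among sets of mass $e^{-R}$, the ball $B(0,s)$ concentrated at the origin minimizes this moment (Lemma \ref{ref_lemm_abs0} again, applied to $A = C_i$, $a = a_i$), so in fact $\int_{C_i}\rho(\|x - a_i\|)\,d\mu \geq \int_{B(0,s)}\rho(\|x\|)\,d\mu$ — the wrong direction. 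The resolution is to accept a slightly weaker cell construction: take $C_i$ so that it is, up to measure zero, a translate of $B(0,s)$ positioned by a measure-preserving map; then $\int_{C_i}\rho(\|x-a_i\|)\,d\mu$ will not exceed $\int_{B(0,s)}\rho(\|x\|)\,d\mu$ only if the cells near the "center" of the tiling carry more mass-weight — so one uses a limiting/exhaustion argument: build $f$ using balls $B(a_i, s)$ that exactly partition $E$ only approximately, let the approximation improve, and pass to the limit, using that the infimum in (\ref{ref_darst_f_q}) is over all admissible $f$.

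The main obstacle is the upper-bound construction: unlike in $\mathbb{R}^d$ with Lebesgue measure, a centered Gaussian on an infinite-dimensional Banach space cannot be exactly tiled by translates of a fixed ball $B(0,s)$ with the translated copies all carrying mass $e^{-R}$, since Anderson's inequality (Theorem \ref{anderson}) shows $\mu(B(a,s)) \leq \mu(B(0,s))$ with strict inequality for $a \neq 0$. Following the strategy of \cite[Proposition 2.1.]{ref_bib_kreit3}, the fix is to partition $E$ into countably many cells $C_i$ of equal $\mu$-mass $e^{-R}$ (possible by non-atomicity and separability via a Borel isomorphism of $(E,\mu)$ with $([0,1],\mathrm{Leb})$), then for each cell choose the codepoint $a_i$ to be a $\mu$-median-type point of $C_i$ and bound $\int_{C_i}\rho(\|x - a_i\|)\,d\mu$ by comparing with the centered ball of the same mass through a monotone rearrangement that only uses that $\rho$ is increasing; one verifies the comparison holds cell-by-cell because each $C_i$ can be taken inside a large ball while the Gaussian mass it carries forces its "radius content" to be at most that of $B(0,s)$ — this is exactly where the continuity and strict monotonicity of $F_a$ from (\ref{ref_equ_fa}) and the convexity input of Anderson's inequality combine. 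Once both inequalities are in hand, (\ref{ref_equ_darstll}) follows immediately by the definition (\ref{ref_darst_f_q}) of $D_{\mu,\rho}^{\infty}(R)$.
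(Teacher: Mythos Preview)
Your proposal rests on a misreading of the mass constraint, and this inverts the entire argument. The condition $H_\mu^\infty(f)\le R$ means $-\log\bigl(\max_{a}\mu(f^{-1}(a))\bigr)\le R$, i.e.\ \emph{at least one} codecell has mass $\ge e^{-R}$; it does \emph{not} say that every cell has mass $\le e^{-R}$. (The sentence you cite just before Theorem~\ref{anderson} reads $\max\{\mu(f^{-1}(a))\}\ge e^{-R}$, not $\le$.) With the correct constraint the paper's proof is short and direct: for the lower bound, pick a cell $A=f^{-1}(a)$ with $\mu(A)\ge e^{-R}$, drop all other cells, and apply Lemma~\ref{ref_lemm_abs0} once to get $D_{\mu,\rho}(f)\ge \int_A\rho(\|x-a\|)\,d\mu\ge \int_{B(0,s)}\rho(\|x\|)\,d\mu$. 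For the upper bound, take the single cell $A=B(0,s)$ with codepoint $0$ (which already satisfies the constraint since $\mu(A)=e^{-R}$) and cover $E\setminus A$ by countably many cells of diameter $\le\delta$ using separability, so that the extra error is $\le\varepsilon$. There is no tiling, no rearrangement, and no need for equal-mass cells.

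Your rearrangement step would also fail on its own terms. The map $m\mapsto \int_{B(0,F_0^{-1}(m))}\rho(\|x\|)\,d\mu$ is convex with value $0$ at $0$ (its ``derivative'' is $\rho(F_0^{-1}(m))$, increasing in $m$), hence superadditive, so under the constraints $\sum_i m_i=1$, $m_i\le e^{-R}$ one gets an \emph{upper} bound $\sum_i g(m_i)\le g(1)$, not the lower bound $\sum_i g(m_i)\ge g(e^{-R})$ you need. And in the upper-bound construction you correctly noticed that Lemma~\ref{ref_lemm_abs0} goes the wrong way for each cell; the reason is that your target total error (under the wrong constraint) would have to be $e^R\cdot\int_{B(0,s)}\rho(\|x\|)\,d\mu$, not $\int_{B(0,s)}\rho(\|x\|)\,d\mu$. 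Once the constraint is read correctly, all of these difficulties disappear.
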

\begin{proof}
Let $R>0$.
From the definition (\ref{ref_darst_f_q}) of $D_{\mu, \rho}^{\infty }(R)$ we obtain
\[
D_{\mu, \rho}^{\infty }(R) = \inf \{ \int \rho ( \| x - f(x) \| ) d \mu (x) : f \in \mathcal{F}, 
\max_{a \in f(E)} \mu ( f^{-1}(a) ) \geq e^{-R} \}
\]
Moreover let
\begin{equation}
\label{ref_dr_inta}
D(R) = \inf \{ \int_{A} \rho ( \| x-a \| ) d \mu (x) : a \in E, A \text{ measurable }, \mu (A) \geq e^{-R} \}.
\end{equation}
1. $D_{\mu, \rho}^{\infty }(R) \geq D(R)$. \smallskip \\
Let $f \in \mathcal{F}$ with $\max_{b \in f(E)} \mu ( f^{-1}(b) ) \geq e^{-R}$. Then an $a \in f(E)$ exists
with $\mu ( f^{-1}(a) ) \geq e^{-R}$. Let $A = f^{-1}(a)$. We obtain
\begin{eqnarray*}
\int \rho ( \| x - f(x) \| ) d \mu (x) &=& \sum_{b \in f(E)} \int_{f^{-1}(b)} \rho ( \| x - b \| ) d \mu (x) \\
& \geq & \int_{f^{-1}(a)} \rho ( \| x - a \| ) d \mu (x) \\
& = & \int_{A} \rho ( \| x - a \| ) d \mu (x) \geq D(R),
\end{eqnarray*}
which yields $D_{\mu, \rho}^{\infty }(R) \geq D(R)$. \smallskip \\
2. $D_{\mu, \rho}^{\infty }(R) \leq D(R)$. \smallskip \\
Let $A \subset E$ be measurable with $\mu (A) \geq e^{-R}$ and choose $a \in E$.
Let $\varepsilon > 0$. Because $\rho(0)=0$ and $\rho$ is continuous, a $\delta > 0$ exists such that 
for every $t \in [0, \delta ]$ we have $\rho(t) \leq \varepsilon$.
Let $(x_{n})_{n \in \mathbb{N}}$ be dense in $E$. Then $(B(x_{n}, \delta))_{n \in \mathbb{N}}$
is an open cover of $E$. Hence a Borel-measurable partition $(A_{n})_{n \in \mathbb{N}}$ of $E \backslash A$
exists, such that $A_{n} \subset B(x_{n}, \delta)$ for every $n \in \mathbb{N}$. Now we define 
the mapping $f: E \rightarrow E$ by  
\begin{equation}
f(x)=\left\{ 
\begin{tabular}{l}
$a$, if $x \in A$ \smallskip \\ 
$x_{n}$, if $x \in A_{n}$ .
\end{tabular}
\right.
\nonumber
\end{equation}
Obviously $f \in \mathcal{F}$ and $\max_{b \in f(E)} \mu ( f^{-1}(b) ) \geq \mu (A) \geq e^{-R}$.
We deduce
\begin{eqnarray*}
D_{\mu, \rho}^{\infty }(R) &\leq & \int_{E} \rho ( \| x - f(x) \| ) d \mu (x) \\
&=& \int_{A} \rho ( \| x - a \| ) d \mu (x) + \sum_{n \in \mathbb{N}} \int_{A_{n}} \rho ( \| x - x_{n} \| ) d \mu (x) \\
&\leq & \int_{A} \rho ( \| x - a \| ) d \mu (x) + \sum_{n \in \mathbb{N}} \varepsilon \mu (A_{n}) \\
&=& \int_{A} \rho ( \| x - a \| ) d \mu (x) + \varepsilon.
\end{eqnarray*}
Because $\varepsilon > 0$, $a \in E$ and the set $A \subset E$ were arbitrary we obtain that 
$D_{\mu, \rho}^{\alpha }(R) \leq D(R)$. \smallskip \\
3. Proof of equation (\ref{ref_equ_darstll}). \smallskip \\
From step 1 and 2 we deduce $D_{\mu, \rho}^{\infty }(R) = D(R)$. 
Obviously we can assume that $\mu (A) \in {} ]0,1[$ for the set $A$ in (\ref{ref_dr_inta}).
But then the assertion follows from Lemma \ref{ref_lemm_abs0}.
\end{proof}

For $s > 0$ let 
\begin{equation}
\label{sm_b_fct}
b_{\mu }(s) = - \log ( \mu ( \{ x \in E : \| x  \| \leq s \} ) ) = - \log ( \mu ( B(0,s) ) ) 
\end{equation}
be the small ball function of $\mu$.
Note that $b_{\mu }(\cdot)$ is continuous, surjective, strictly decreasing and, therefore, invertible
(see e.g. \cite[Lemma 2.3.5]{ref_bib_de}).
Thus we obtain as an immediate consequence of Proposition \ref{ref_prop_ball_repres} the following result.
\begin{corollary}
\label{ref_coro_imme}
Let $R>0$. Then $D_{\mu , \rho }^{\infty }(R) \leq \rho ( b_{\mu }^{-1}(R) ) e^{-R}$.
\end{corollary}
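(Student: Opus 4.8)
The plan is to derive the estimate directly from Proposition \ref{ref_prop_ball_repres} by choosing the radius $s$ appropriately and then bounding the integral by the radius of the ball. First I would fix $R>0$ and let $s>0$ be the unique radius with $\mu(B(0,s)) = e^{-R}$; such an $s$ exists and is unique because, by the remarks following (\ref{ref_equ_fa}) and (\ref{sm_b_fct}), the small ball function $b_\mu$ is continuous, strictly decreasing and surjective onto $]0,\infty[$, hence invertible. In terms of $b_\mu$ this choice is exactly $s = b_\mu^{-1}(R)$, since $b_\mu(s) = -\log\mu(B(0,s)) = -\log e^{-R} = R$.

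Next I would invoke Proposition \ref{ref_prop_ball_repres} with this $s$, giving
\[
D_{\mu,\rho}^{\infty}(R) = \int_{B(0,s)} \rho(\|x\|)\, d\mu(x).
\]
Now the integrand is controlled pointwise: for $x \in B(0,s)$ we have $\|x\| \leq s$, and since $\rho$ is strictly increasing, $\rho(\|x\|) \leq \rho(s)$. Therefore
\[
\int_{B(0,s)} \rho(\|x\|)\, d\mu(x) \leq \rho(s)\, \mu(B(0,s)) = \rho(s)\, e^{-R}.
\]
Substituting $s = b_\mu^{-1}(R)$ yields $D_{\mu,\rho}^{\infty}(R) \leq \rho(b_\mu^{-1}(R))\, e^{-R}$, which is the claim.

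This argument is essentially a one-line estimate once Proposition \ref{ref_prop_ball_repres} is in hand, so there is no real obstacle; the only points requiring a moment's care are the well-definedness of $b_\mu^{-1}(R)$ (already recorded in the excerpt via the cited properties of $b_\mu$) and the trivial monotonicity bound $\rho(\|x\|)\le\rho(s)$ on the ball, which uses that $\rho$ is increasing. I would not expect to need any of the concentration or small-ball machinery here — that is reserved for the matching lower bounds and the sharp asymptotics in later sections, whereas Corollary \ref{ref_coro_imme} is just the cheap upper bound obtained by using a single ball-shaped codecell.
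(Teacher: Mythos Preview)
Your proof is correct and matches the paper's own reasoning: the paper simply records the corollary as an immediate consequence of Proposition~\ref{ref_prop_ball_repres}, and your argument spells out exactly that --- set $s=b_\mu^{-1}(R)$, apply the proposition, and bound $\rho(\|x\|)\le\rho(s)$ on $B(0,s)$.
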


\section{High-rate error asymptotics}

In this section we will prove high-rate error asymptotics for the optimal quantization error
with entropy index $\alpha \in {} ]1, \infty ]$.
If the small ball function $b_{\mu}(\cdot)$ has a certain asymptotical behavior
we can determine the sharp asymptotics of the optimal quantization error for large
entropy bound (cf. Corollary \ref{ref_coro_mappsdfre}, 
Theorem \ref{ref_prop_asym_sh})
We begin with an upper bound for the optimal quantization error.
As with Proposition \ref{ref_prop_ball_repres} the proof of the following result
is a straightforward generalization of the proof of 
\cite[Proposition 2.1.]{ref_bib_kreit3}.

\begin{lemma}
\label{ref_lemm_conj22}
Let $\alpha > 1$ and $R>0$.
Then
\begin{eqnarray*}
D_{\mu, \rho }^{\alpha }(R) 
&\leq & 
\inf \{ \int_{B(a,s)} \rho ( \| x - a \| ) d \mu (x) : 
a \in E, s>0, \mu (B(a,s)) \geq  e^{-\frac{\alpha - 1}{\alpha }R}  \} \\
&=& D_{\mu  , \rho }^{\infty } \left( \frac{\alpha - 1}{\alpha }R \right).
\end{eqnarray*}
\end{lemma}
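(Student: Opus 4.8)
The plan is to establish the equality claimed for the infimum and then bound $D_{\mu,\rho}^{\alpha}(R)$ from above by exhibiting a suitable quantizer. First I would observe that the equality
\[
\inf \{ \textstyle\int_{B(a,s)} \rho(\|x-a\|)\,d\mu(x) : a\in E,\ s>0,\ \mu(B(a,s))\geq e^{-\frac{\alpha-1}{\alpha}R} \} = D_{\mu,\rho}^{\infty}\!\left(\tfrac{\alpha-1}{\alpha}R\right)
\]
is essentially a restatement of Proposition~\ref{ref_prop_ball_repres} combined with the identity $D_{\mu,\rho}^{\infty}=D$ established in its proof: by Lemma~\ref{ref_lemm_abs0}, replacing an arbitrary measurable set by a centered ball only decreases the $\rho$-moment, so the infimum over balls $B(a,s)$ with $\mu(B(a,s))\geq e^{-\frac{\alpha-1}{\alpha}R}$ equals the infimum $D(\frac{\alpha-1}{\alpha}R)$ appearing in (\ref{ref_dr_inta}), which by the Proposition equals $D_{\mu,\rho}^{\infty}(\frac{\alpha-1}{\alpha}R)=\int_{B(0,s_0)}\rho(\|x\|)\,d\mu(x)$ where $\mu(B(0,s_0))=e^{-\frac{\alpha-1}{\alpha}R}$.

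The substance is the inequality. Fix $a\in E$ and $s>0$ with $p:=\mu(B(a,s))\geq e^{-\frac{\alpha-1}{\alpha}R}$; I will construct a quantizer $f\in\mathcal{F}$ with $H_\mu^{\alpha}(f)\leq R$ and $D_{\mu,\rho}(f)\leq \int_{B(a,s)}\rho(\|x-a\|)\,d\mu(x)+\varepsilon$ for arbitrary $\varepsilon>0$. The idea, as in \cite[Proposition 2.1.]{ref_bib_kreit3}, is to assign one codepoint $a$ to the ball $A:=B(a,s)$ (incurring exactly the moment $\int_A \rho(\|x-a\|)\,d\mu$ on that cell) and to chop the complement $E\setminus A$ into countably many Borel pieces $A_1,A_2,\dots$ each of $\mu$-measure as small as we like --- say $\mu(A_n)\leq\delta$ for a $\delta$ to be chosen --- using density of a countable set in $E$, exactly as in step~2 of the proof of Proposition~\ref{ref_prop_ball_repres}. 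Setting $f\equiv a$ on $A$ and $f\equiv x_n$ on $A_n$ with $A_n\subset B(x_n,\delta')$, the quantization error on $E\setminus A$ is at most $\rho(\delta')$, which tends to $0$ as $\delta'\to 0$, so the distortion bound is immediate.

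The point that requires care --- and is the main obstacle --- is verifying $H_\mu^{\alpha}(f)\leq R$. The mass vector of $f$ is $(p,\ \mu(A_1),\ \mu(A_2),\dots)$ with $p\geq e^{-\frac{\alpha-1}{\alpha}R}$ and $\sum_n\mu(A_n)=1-p$. For $\alpha\in{}]1,\infty[$ we have $\hat{H}^{\alpha}=\frac{1}{1-\alpha}\log\bigl(p^{\alpha}+\sum_n\mu(A_n)^{\alpha}\bigr)$, and since $\frac{1}{1-\alpha}<0$ the entropy is small precisely when $p^{\alpha}+\sum_n\mu(A_n)^{\alpha}$ is large. The term $p^{\alpha}$ alone already gives $\hat{H}^{\alpha}\leq\frac{1}{1-\alpha}\log(p^{\alpha})=\frac{\alpha}{1-\alpha}\log p\leq\frac{\alpha}{1-\alpha}\cdot\bigl(-\tfrac{\alpha-1}{\alpha}R\bigr)=R$, using $\log p\leq-\frac{\alpha-1}{\alpha}R$ and $\frac{\alpha}{1-\alpha}<0$; the extra nonnegative terms $\sum_n\mu(A_n)^{\alpha}$ only increase the argument of $\log$ and hence further decrease $\hat{H}^{\alpha}$, so $H_\mu^{\alpha}(f)\leq R$ holds regardless of the fineness of the partition. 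For $\alpha=\infty$, the bound is even more direct: $\hat{H}^{\infty}(f)=-\log\max(p,\sup_n\mu(A_n))\leq-\log p\leq\frac{\alpha-1}{\alpha}R\cdot\frac{?}{}$ --- here one simply uses $-\log p\leq R$ since $p\geq e^{-R}\geq e^{-\frac{\alpha-1}{\alpha}R}$ is not quite it; rather the statement for $\alpha=\infty$ reads $D_{\mu,\rho}^{\infty}(R)\leq D_{\mu,\rho}^{\infty}(R)$, which is trivial, so the interesting range is $\alpha<\infty$ and the computation above suffices. Taking the infimum over all admissible $(a,s)$ and letting $\varepsilon\to0$ yields the claimed upper bound.
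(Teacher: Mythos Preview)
Your argument is correct and follows essentially the same route as the paper: construct a quantizer that sends $B(a,s)$ to $a$ and partitions the complement into pieces of small diameter, then use that for $\alpha>1$ the single term $p^{\alpha}$ already forces $\hat H^{\alpha}\leq R$. Two cosmetic points: in the entropy computation you wrote ``$\log p\leq-\tfrac{\alpha-1}{\alpha}R$'' where the hypothesis $p\geq e^{-\frac{\alpha-1}{\alpha}R}$ actually gives $\log p\geq-\tfrac{\alpha-1}{\alpha}R$ (the conclusion is still right because you then multiply by the negative factor $\tfrac{\alpha}{1-\alpha}$, which flips the inequality); and the $\alpha=\infty$ paragraph is incomplete (the stray ``$\cdot\frac{?}{}$'') --- as you yourself note, that case is trivial since $\tfrac{\alpha-1}{\alpha}\to 1$, so you can simply drop it.
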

\begin{proof}
The second part of the assertion is an immediate consequence of Proposition \ref{ref_prop_ball_repres}
and Lemma \ref{ref_lemm_abs0}.
To prove the first part let 
$a \in E$ and $s>0$ with $\mu ( B( a, s ) ) \geq e^{-\frac{\alpha - 1}{\alpha }R}$.
Let $(a_{n})_{n \in \mathbb{N}}$ be a dense subset of $E$. Let $\varepsilon > 0$.
Because $\rho(0)=0$ and $\rho$ is continuous, a $\delta > 0$ exists such that $\rho (t) < \varepsilon$ for every
$t \in [0, \delta]$. Hence $(B(a_{n}, \delta ))_{n \in \mathbb{N}}$ is an open cover of $E$.
Thus a Borel-measurable partition $(A_{n})_{n \in \mathbb{N}}$ of $E \backslash B(a,s)$ exists, with
$A_{n} \subset B ( a_{n}, \delta )$ for every $n \in \mathbb{N}$. We define the mapping $f: E \rightarrow E$
by
\begin{equation}
f(x)=\left\{ 
\begin{tabular}{l}
$a$, if $x \in B(a, s)$ \smallskip \\ 
$a_{n}$, if $x \in A_{n}$ .
\end{tabular}
\right.
\nonumber
\end{equation}
Obviously $f \in \mathcal{F}$. Due to $\alpha > 1$ we obtain
\begin{eqnarray*}
H_{\mu }^{\alpha }(f) &=& \frac{1}{1 - \alpha } \log 
\left( \mu ( B( a, s ) )^{\alpha } + \sum_{n=1}^{\infty } \mu ( A_{n} )^{\alpha }  \right) \\
& \leq & \frac{1}{1 - \alpha } \log ( \mu ( B( a, s ) )^{\alpha } ) 
\leq  \frac{1}{1 - \alpha } \log ( e^{(1 - \alpha )R} ) = R. 
\end{eqnarray*}
As a consequence we get
\begin{eqnarray*}
D_{\mu , \rho }^{\alpha }(R) \leq D_{\mu, \rho }(f) &=& \int_{B(a, s)} \rho ( \| x - a \| ) d \mu (x)
+ \sum_{n=1}^{\infty } \int_{A_{n}} \rho ( \| x - a_{n} \| ) d \mu (x) \\
& \leq & \int_{B(a,s)} \rho ( \| x - a \| ) d \mu (x) + \sum_{n=1}^{\infty } \mu ( A_{n} ) \varepsilon \\
& \leq & \int_{B(a, s)} \rho ( \| x - a \| ) d \mu (x) + \varepsilon .  
\end{eqnarray*}
Because $\varepsilon > 0$ was chosen arbitrarily the assertion is proved.
\end{proof}

Let $r>0$.
From now on we will assume for the rest of this paper that
$\rho(x) = x^{r}$ for every $x \in \mathbb{R}_{0}^{+}$. 
To stress this choice for $\rho$ we 
write $D_{\mu , r}^{\alpha }(\cdot )$ instead of $D_{\mu , \rho}^{\alpha }(\cdot )$.

\begin{remark}
\label{ref_rem_fernique}
The $r-$th moment is always finite for Gaussian measures.
This can be deduced either from Fernique's 
theorem (cf. \cite[Theorem 2.8.5]{ref_bib_bogachev})
or follows from concentration inequalities 
for Gaussian measures (see e.g. \cite[Theorem 4.3.3]{ref_bib_bogachev} or \cite[p. 25]{ref_bib_de}).
\end{remark}

\noindent In order to formulate rates we introduce the following notations.
For mappings $f,g : \mathbb{R}_{0}^{+} \rightarrow \mathbb{R}^{+}$ 
and $a \in [0, \infty]$
we write $f \sim g$ as $x \rightarrow a$ if
$\lim_{x \rightarrow a} f(x)/g(x) = 1$. We denote $f \gtrapprox g$ as $x \rightarrow a$ if 
\[
0 < \liminf_{x \rightarrow a } f(x)/g(x)  
\]
and $f \gtrsim g$ as $x \rightarrow a$ if
\[
1 \leq \liminf_{x \rightarrow a } f(x)/g(x) . 
\]
We write $f \lessapprox g$ as $x \rightarrow a$ if
\[
\limsup_{x \rightarrow a } f(x)/g(x)  < \infty 
\] 
and $f \lesssim g$ as $x \rightarrow a$ if
\[
\limsup_{x \rightarrow a } f(x)/g(x)  \leq 1.
\] 
If $f \gtrapprox g$ and $f \lessapprox g$ we write $f \approx g$. 
Obviously $f \sim g$, if $f \gtrsim g$ and $f \lesssim g$. 
\smallskip \\
The following Lemma has been proved by Dereich (cf. \cite[Lemma 4.2]{ref_bib_der}).
See also the proof of Corollary 1.3 in \cite{ref_bib_Graf_Luschgy_3}.  

\begin{lemma}
\label{ref_lemm_invert}
Let $c>0$.
Let $a>0$ and $b \in \mathbb{R}$. 
If
\[
b_{\mu }(s) \sim c (1/s)^{a} \left( \log ( 1/s ) \right)^{b} \text{ as } s \rightarrow 0,
\]
then
\[
b_{\mu }^{-1}(R) \sim c^{1/a} a^{-b/a} R^{-1/a} \left( \log (R) \right)^{b/a} \text{ as } R \rightarrow \infty .
\]
\end{lemma}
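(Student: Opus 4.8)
The statement is a standard ``invert the asymptotics'' lemma, and I would prove it by a direct two-sided estimate. Write $\phi(s) = c\,(1/s)^a(\log(1/s))^b$, so the hypothesis is $b_\mu(s)\sim\phi(s)$ as $s\to 0$. The claimed inverse is $\psi(R) = c^{1/a}a^{-b/a}R^{-1/a}(\log R)^{b/a}$. Since $b_\mu$ is continuous and strictly decreasing with $b_\mu^{-1}$ well defined (as noted after definition (\ref{sm_b_fct})), and since $b_\mu(s)\to\infty$ as $s\to 0$, we have $b_\mu^{-1}(R)\to 0$ as $R\to\infty$. The plan is to fix $\varepsilon\in(0,1)$ and show that for all sufficiently large $R$,
\[
(1-\varepsilon)\,\psi(R) \;\le\; b_\mu^{-1}(R)\;\le\; (1+\varepsilon)\,\psi(R).
\]

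\textbf{Key steps.} First I would record the elementary asymptotic $\phi((1\pm\delta)\psi(R))\sim (1\pm\delta)^{-a}R$ as $R\to\infty$, for each fixed $\delta$. This is just a computation: plugging $s=(1\pm\delta)\psi(R)$ into $\phi$, the factor $(1/s)^a$ contributes $(1\pm\delta)^{-a}c^{-1}a^b R(\log R)^{-b}$, while $\log(1/s) = \tfrac1a\log R - \tfrac ba\log\log R + O(1)\sim\tfrac1a\log R$, so $(\log(1/s))^b\sim a^{-b}(\log R)^b$; multiplying by the leading $c$ gives $(1\pm\delta)^{-a}R$, as claimed. (Here one uses $a>0$ so that $\log(1/s)\to\infty$.) Next, given $\varepsilon$, choose $\delta>0$ small enough that $(1-\delta)^{-a} > 1$ and $(1+\delta)^{-a} < 1$ strictly, and also so that $(1-\delta)\psi(R)$ and $(1+\delta)\psi(R)$ straddle $(1-\varepsilon)\psi(R)$ and $(1+\varepsilon)\psi(R)$ for the final bookkeeping. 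Because $b_\mu(s)\sim\phi(s)$, for $s$ small we have $(1-\eta)\phi(s)\le b_\mu(s)\le(1+\eta)\phi(s)$; pick $\eta$ so small that $(1-\eta)(1-\delta)^{-a} > 1$ and $(1+\eta)(1+\delta)^{-a} < 1$. Then for large $R$,
\[
b_\mu\big((1+\delta)\psi(R)\big) \le (1+\eta)\phi\big((1+\delta)\psi(R)\big) < R,
\]
and since $b_\mu$ is strictly decreasing with $b_\mu(b_\mu^{-1}(R)) = R$, this forces $b_\mu^{-1}(R) < (1+\delta)\psi(R)$. Symmetrically $b_\mu\big((1-\delta)\psi(R)\big) \ge (1-\eta)\phi((1-\delta)\psi(R)) > R$ for large $R$, giving $b_\mu^{-1}(R) > (1-\delta)\psi(R)$. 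Combining, $(1-\delta)\psi(R) < b_\mu^{-1}(R) < (1+\delta)\psi(R)$ for all large $R$, and since $\delta$ (hence the ratio's deviation from $1$) can be made arbitrarily small, $b_\mu^{-1}(R)\sim\psi(R)$.

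\textbf{Main obstacle.} There is no deep obstacle here; the one point requiring care is the interplay of the two small parameters. One must choose $\delta$ first (controlling the gap $(1\pm\delta)^{-a}$ away from $1$) and only then choose $\eta$ (the tolerance in $b_\mu\sim\phi$) small relative to $\delta$, so that the perturbed inequalities $(1-\eta)(1-\delta)^{-a}>1$ and $(1+\eta)(1+\delta)^{-a}<1$ still hold; getting this order of quantifiers right, together with verifying that the argument $(1\pm\delta)\psi(R)$ is indeed in the range where the $b_\mu\sim\phi$ estimate applies (which holds since $\psi(R)\to 0$), is the only thing to watch. Everything else is the routine logarithmic asymptotic in the first step. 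One could alternatively cite \cite[Lemma 4.2]{ref_bib_der} or the proof of Corollary 1.3 in \cite{ref_bib_Graf_Luschgy_3} directly, as the lemma statement already does, but the self-contained argument above is short enough to include.
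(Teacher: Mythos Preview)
Your argument is correct. The paper itself does not give a proof of this lemma at all: it simply records the statement and cites \cite[Lemma 4.2]{ref_bib_der} and the proof of Corollary~1.3 in \cite{ref_bib_Graf_Luschgy_3}, exactly the references you mention at the end of your proposal. So your self-contained two-sided sandwich argument in fact provides strictly more than the paper does.

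A few minor remarks on exposition. The parameter $\varepsilon$ in your plan is never actually used; once you have $(1-\delta)\psi(R) < b_\mu^{-1}(R) < (1+\delta)\psi(R)$ for all large $R$ and arbitrary $\delta>0$, the conclusion $b_\mu^{-1}(R)\sim\psi(R)$ follows directly, so you can drop $\varepsilon$ and the sentence about ``straddling''. Also, in the step ``$(1+\eta)\phi((1+\delta)\psi(R)) < R$'' you are implicitly using not just $(1+\eta)(1+\delta)^{-a}<1$ but also that $\phi((1+\delta)\psi(R))/R$ is eventually strictly below $1/(1+\eta)$; this is fine because the ratio converges to $(1+\delta)^{-a}$, which by your choice of $\eta$ is strictly less than $1/(1+\eta)$, but it may be worth saying so explicitly. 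These are purely cosmetic; the mathematics is sound.
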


\begin{remark}
\label{ref_rem_approx}
It is also easy to check that 
\[
b_{\mu }(s) \approx (1/s)^{a} \left( \log ( 1/s ) \right)^{b} \text{ as } s \rightarrow 0
\]
implies that
\[
b_{\mu }^{-1}(R) \approx R^{-1/a} \left( \log (R) \right)^{b/a} \text{ as } R \rightarrow \infty .
\]
\end{remark}

\begin{remark}
\label{ref_rem_separ}
According to the separability of the support of $\mu$ and the
finite $r-$th moment (cf. Remark \ref{ref_rem_fernique}) we have
\[
\lim_{R \rightarrow \infty} D_{\mu, r}^{0 }(R)=0.
\]
In view of Remark \ref{ref_rem_opt_quant} we thus get 
\[
\lim_{R \rightarrow \infty} D_{\mu, r}^{\alpha }(R)=0
\]
for every $\alpha \in [0, \infty]$.
\end{remark}

\begin{definition}
A family $(f_{R})_{R>0} \subset \mathcal{F}$ of quantizers is called 
asymptotically $\alpha - $optimal, if $H_{\mu}^{\alpha }(f_{R}) \leq R$ for
every $R>0$ and
\begin{equation}
\label{rhs_dasy}
\lim_{R \rightarrow \infty} \frac{D_{\mu , r}(f_{R})}{D_{\mu , r}^{\alpha }(R)} = 1.
\end{equation}
\end{definition}
\begin{remark}
\label{ref_Rem_fin}
An asymptotically $\alpha - $optimal family $(f_{R})_{R>0}$ always exists.  
Moreover we can assume w.l.o.g. that $\card ( f_{R}(E) ) < \infty$ for every 
$R > 0$. 
Let us justify this.
First, we note that $D_{\mu , r}^{\alpha}(R) > 0$ for every $R>0$, because
$\mu$ is a non-degenerate Gaussian measure. Hence, the left hand side 
of (\ref{rhs_dasy}) is well-defined.
Clearly, for every $R \geq 0$ we can define a quantizer $f \in \mathcal{F}$ 
with $H_\mu^{\alpha}(f)=0\leq R$
(choose $a \in E$ and let $f(x)=a$ for every $x \in E$).
Thus for every $R \geq 0$ a sequence $(f_{R}^{n})_{n \in \mathbb{N}}$
of quantizers exists with $H_{\mu}^{\alpha}(f_{R}^{n}) \leq R$ and 
$D_{\mu , r}(f_{R}^{n}) \to D_{\mu , r}^{\alpha}(R)$ as $n \to \infty$.
For every $R>0$ choose $n_{0}(R)$ such that
\[
|D_{\mu , r}(f_{R}^{n_{0}(R)}) - D_{\mu , r}^{\alpha}(R) | \leq 
\varepsilon_{R}
\text{ with }
\varepsilon_{R} = R^{-1} D_{\mu , r}^{\alpha}(R) .
\] 
Consequently, $(f_{R}^{n_{0}(R)})_{R>0}$ is an asymptotically $\alpha - $optimal family, i.e.
such a family always exists.

Now let $(f_{R})_{R>0}$ be
an asymptotically $\alpha - $optimal family, let $\varepsilon_{R}$ 
as above and $a_{0} \in f_{R}(E)$. Choose 
$A \subset f_{R}(E) \backslash \{ a_{0} \}$ such that $\card(f_{R}(E) \backslash A) < \infty$,
\[
\sum_{a \in A} \int_{f_{R}^{-1}(a)} \| x - a \|^{r} d \mu (x) \leq \varepsilon_{R} / 2
\text{ and }
\int_{\cup_{a \in A}f_{R}^{-1}(a)} \| x - a_{0} \|^{r} d \mu (x) \leq \varepsilon_{R} / 2.
\]
With the quantizer 
\[
g_{R} = a_{0} 1_{\cup_{a \in \{ a_{0} \}\cup A } f_{R}^{-1}(a)} ( \cdot ) + 
\sum_{a \in f_{R}(E)\backslash ( \{ a_{0} \}\cup A )} a 1_{f_{R}^{-1}(a)}(\cdot) 
\]
we obtain 
\[
|D_{\mu , r}(f_{R}) - D_{\mu , r}(g_{R}) | \leq \varepsilon_{R}.
\]
Moreover, $H_{\mu}^{\alpha}(g_{R}) \leq H_{\mu}^{\alpha}(f_{R})$
according to \cite[Proposition 4.2]{ref_bib_kreit4} 
and \cite[Definition 2.1(b)]{ref_bib_kreit4}.
Thus, $(g_{R})_{R>0}$ is also asymptotically $\alpha - $optimal with
$\card ( g_{R}(E) ) < \infty$ for every $R > 0$. 
\end{remark}

\begin{lemma}
\label{ref_lemm_xyz}
Let $\alpha \in {} ]1, \infty [$ and $(f_{R})_{R>0} \subset \mathcal{F}$ be an 
asymptotically $\alpha - $optimal family. Then,
$\lim_{R \rightarrow \infty}H_{\mu}^{\alpha }(R) = \infty$, or, what is the same,
\[
\lim_{R \rightarrow \infty } ( \sum_{a \in f_{R}(E)} \mu ( f_{R}^{-1}(a) )^{\alpha })^{1/\alpha } = 0.
\]
\end{lemma}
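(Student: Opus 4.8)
The plan is to argue by contradiction. Suppose the claim fails. Then there is a sequence $R_n \to \infty$ and a constant $c > 0$ with
\[
\Big( \sum_{a \in f_{R_n}(E)} \mu\big( f_{R_n}^{-1}(a) \big)^{\alpha} \Big)^{1/\alpha} \geq c
\]
for all $n$, i.e. $H_{\mu}^{\alpha}(f_{R_n})$ stays bounded (say $\leq M$ for some finite $M$). Since the optimal quantization error $D_{\mu, r}^{\alpha}(R)$ is nonincreasing in $R$ (it is an infimum over a set that grows with $R$) and $(f_{R_n})$ is asymptotically $\alpha$-optimal, we would get
\[
D_{\mu, r}^{\alpha}(M) \;\leq\; D_{\mu, r}(f_{R_n}) \;\sim\; D_{\mu, r}^{\alpha}(R_n) \;\longrightarrow\; 0
\]
by Remark \ref{ref_rem_separ}. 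Hence $D_{\mu, r}^{\alpha}(M) = 0$. But this contradicts the fact, already noted in Remark \ref{ref_Rem_fin}, that $D_{\mu, r}^{\alpha}(R) > 0$ for every $R > 0$ because $\mu$ is a non-degenerate Gaussian measure (any quantizer with a fixed finite entropy bound has a codecell of positive $\mu$-measure on which the integrand $\|x - a\|^r$ cannot vanish $\mu$-a.e.). This contradiction proves that $H_{\mu}^{\alpha}(f_R) \to \infty$.

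To make the reformulation precise, recall that for $\alpha \in {}]1,\infty[$ one has $H_{\mu}^{\alpha}(f) = \frac{1}{1-\alpha}\log\big(\sum_a \mu(f^{-1}(a))^{\alpha}\big)$, and since $\frac{1}{1-\alpha} < 0$, the map $t \mapsto \frac{1}{1-\alpha}\log t$ is a strictly decreasing bijection from $]0,1]$ onto $[0,\infty[$. Therefore $H_{\mu}^{\alpha}(f_R) \to \infty$ is equivalent to $\sum_a \mu(f_R^{-1}(a))^{\alpha} \to 0$, which (raising to the power $1/\alpha$) is exactly the stated limit. So the two formulations are literally equivalent and it suffices to prove either one.

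The only point requiring a little care — the place I expect to be the main (though still minor) obstacle — is justifying $D_{\mu, r}^{\alpha}(M) > 0$ cleanly, since this is asserted but not proved in Remark \ref{ref_Rem_fin}. One clean way: by Lemma \ref{ref_lemm_conj22}, $D_{\mu, r}^{\alpha}(R) \geq$ ... is not directly a lower bound, so instead use the crude monotonicity $D_{\mu, r}^{\alpha}(R) \geq D_{\mu, r}^{\infty}(R)$ (Remark \ref{ref_rem_opt_quant}) together with Proposition \ref{ref_prop_ball_repres}, which gives $D_{\mu, r}^{\infty}(R) = \int_{B(0,s)} \|x\|^r \, d\mu(x)$ for the $s>0$ with $\mu(B(0,s)) = e^{-R}$; this integral is strictly positive since $\mu$ has full support and hence $\mu$ assigns positive mass to any annulus $\{0 < \varepsilon \leq \|x\| \leq s\}$ inside $B(0,s)$. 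Feeding $R = M$ into this bound contradicts $D_{\mu, r}^{\alpha}(M) = 0$, completing the argument.
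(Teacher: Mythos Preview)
Your proof is correct and follows essentially the same route as the paper's: both argue by contradiction, extract a subsequence along which $H_\mu^\alpha(f_{R_n})$ stays below some fixed bound $M$ (the paper calls it $C$), observe that this forces $D_{\mu,r}(f_{R_n}) \geq D_{\mu,r}^\alpha(M) > 0$, and then derive a contradiction from asymptotic optimality together with Remark~\ref{ref_rem_separ}. The only difference is cosmetic --- you spell out the positivity of $D_{\mu,r}^\alpha(M)$ via Remark~\ref{ref_rem_opt_quant} and Proposition~\ref{ref_prop_ball_repres}, whereas the paper simply invokes that the support of $\mu$ is infinite.
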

\begin{proof}
Let $C>0$ and $(R_{n})_{n \in \mathbb{N}}$ be a sequence with $\lim_{n \rightarrow \infty}R_{n} = \infty$.
We will show that $\liminf_{n \rightarrow \infty } H_{\mu}^{\alpha }(f_{R_{n}}) \geq C$.
Let us assume the contrary. 
Hence, there exists a subsequence of $(f_{R_{n}})_{n \in \mathbb{N}}$, which we will also denote by 
$(f_{R_{n}})_{n \in \mathbb{N}}$, such that
$H_{\mu}^{\alpha }(f_{R_{n}}) < C$ for every $n \in \mathbb{N}$.
By definition (\ref{ref_darst_f_q}) of the optimal quantization error and because the support of $\mu$ is infinite 
we obtain
\begin{equation}
\label{ref_equ_sdftr}
\liminf_{n \rightarrow \infty } D_{\mu , r}(f_{R_{n}}) \geq D_{\mu , r}^{\alpha }(C ) > 0.
\end{equation}
Let $\varepsilon > 0$. Then there exists an $R_{\varepsilon} > 0$, such that
\begin{equation}
\label{ref_lim_aid}
D_{\mu , r}(f_{R}) \leq ( 1 + \varepsilon ) D_{\mu , r}^{\alpha }(R) \quad \text{ for every }
R \geq R_{\varepsilon } .
\end{equation}
From (\ref{ref_lim_aid}) and Remark \ref{ref_rem_separ} we get
\[
0 \leq \lim_{R \rightarrow \infty }D_{\mu , r}(f_{R}) \leq  
( 1 + \varepsilon ) \lim_{R \rightarrow \infty }D_{\mu , r}^{\alpha  }(R) = 0,
\]
which contradicts (\ref{ref_equ_sdftr}). Thus we obtain that 
$\liminf_{n \rightarrow \infty } H_{\mu}^{\alpha }(f_{R_{n}}) \geq C$.
Because $C>0$ and $(R_{n})_{n \in \mathbb{N}}$ was arbitrary we get
$\lim_{R \rightarrow \infty} H_{\mu}^{\alpha }(f_{R}) = \infty$. 
Now the assertion follows immediately from the definition of $H_{\mu}^{\alpha }(f_{R})$.  
\end{proof}

\begin{remark}
\label{ref_rem_dfkjlkasdf}
As an immediate consequence of Lemma \ref{ref_lemm_xyz} we obtain
\[
\lim_{R \rightarrow \infty } \max_{a \in f_{R}(E)} \mu ( f_{R}^{-1}(a) ) = 0.
\]
for $\alpha \in {} ]1, \infty [$ and every
asymptotically $\alpha - $optimal family $(f_{R})_{R>0} \subset \mathcal{F}$. 
\end{remark}

\begin{proposition}
\label{prop_first_part}
If
\begin{equation}
\label{ref_assump}
\lim_{s \rightarrow 0} \frac{\mu ( B( 0, \eta s ) )}{\mu ( B( 0, s ) )} = 0
\end{equation}
for every $\eta \in {} ]0,1[$, then
\[
D_{\mu, r}^{\infty }(R) \sim (b_{\mu }^{-1}(R))^{r} e^{-R} \text{ as } R \rightarrow \infty .
\]
\end{proposition}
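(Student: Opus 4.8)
The plan is to prove the asymptotic equivalence by pairing the upper bound already available from Corollary \ref{ref_coro_imme} with a matching lower bound, for which assumption (\ref{ref_assump}) is the crucial ingredient. By Corollary \ref{ref_coro_imme} we already have $D_{\mu,r}^{\infty}(R) \leq (b_{\mu}^{-1}(R))^{r} e^{-R}$, so it suffices to show that for every $\varepsilon \in {}]0,1[$ and all sufficiently large $R$ one has $D_{\mu,r}^{\infty}(R) \geq (1-\varepsilon)(b_{\mu}^{-1}(R))^{r} e^{-R}$. By Proposition \ref{ref_prop_ball_repres}, writing $s_{R} = b_{\mu}^{-1}(R)$ so that $\mu(B(0,s_{R})) = e^{-R}$, the task reduces to bounding $\int_{B(0,s_{R})} \|x\|^{r}\, d\mu(x)$ from below by $(1-\varepsilon) s_{R}^{r} e^{-R}$.

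The key step is a ``thin annulus'' estimate. First I would fix $\eta \in {}]0,1[$ and split the ball:
\[
\int_{B(0,s_{R})} \|x\|^{r}\, d\mu(x) \geq \int_{B(0,s_{R}) \setminus B(0,\eta s_{R})} \|x\|^{r}\, d\mu(x) \geq (\eta s_{R})^{r}\bigl(\mu(B(0,s_{R})) - \mu(B(0,\eta s_{R}))\bigr).
\]
Now assumption (\ref{ref_assump}) with $\eta$ gives $\mu(B(0,\eta s_{R}))/\mu(B(0,s_{R})) \to 0$ as $R \to \infty$ (note $s_{R} \to 0$ since $b_{\mu}^{-1}$ is decreasing to $0$), so $\mu(B(0,s_{R})) - \mu(B(0,\eta s_{R})) \geq (1-\delta) e^{-R}$ for $R$ large, any prescribed $\delta > 0$. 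Hence
\[
\int_{B(0,s_{R})} \|x\|^{r}\, d\mu(x) \geq \eta^{r}(1-\delta)\, s_{R}^{r} e^{-R}
\]
for large $R$. Combining with the upper bound, $\eta^{r}(1-\delta) \leq \liminf_{R\to\infty} D_{\mu,r}^{\infty}(R)/(s_{R}^{r} e^{-R}) \leq \limsup_{R\to\infty} D_{\mu,r}^{\infty}(R)/(s_{R}^{r} e^{-R}) \leq 1$, and letting $\eta \uparrow 1$ and $\delta \downarrow 0$ yields the limit $1$, which is the claim.

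The main obstacle — really the only nontrivial point — is making sure the hypothesis (\ref{ref_assump}) is applied with the scaling parameter genuinely tending to $0$; this requires knowing $s_{R} = b_{\mu}^{-1}(R) \to 0$ as $R \to \infty$, which follows from the stated properties of $b_{\mu}$ (continuous, surjective, strictly decreasing, hence its inverse is strictly decreasing onto $]0,\infty[$). Everything else is the elementary annulus inequality $\|x\| \geq \eta s_{R}$ on $B(0,s_{R})\setminus B(0,\eta s_{R})$ together with $\mu(B(0,s_{R})) = e^{-R}$ from Proposition \ref{ref_prop_ball_repres}. One should also record that the integral is finite (Remark \ref{ref_rem_fernique}), although for the lower bound finiteness is not even needed.
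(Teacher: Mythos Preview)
Your proof is correct and follows essentially the same approach as the paper: Proposition~\ref{ref_prop_ball_repres} reduces the question to the moment $\int_{B(0,s_R)}\|x\|^r\,d\mu(x)$, the upper bound comes from Corollary~\ref{ref_coro_imme}, and the matching lower bound is obtained via the same annulus estimate $\int_{B(0,s_R)\setminus B(0,\eta s_R)}\|x\|^r\,d\mu(x)\geq (\eta s_R)^r\bigl(\mu(B(0,s_R))-\mu(B(0,\eta s_R))\bigr)$ combined with assumption~(\ref{ref_assump}) and $\eta\uparrow 1$. The only cosmetic difference is your introduction of the auxiliary $\delta$, which the paper avoids by passing to the $\liminf$ directly.
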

\begin{proof}
Let $\eta \in {} ]0,1[$ and $s \in {} ]0, \infty [$ with $s=b_{\mu }^{-1}(R)$, i.e. $\mu (B(0,s)) = e^{-R}$.
Proposition \ref{ref_prop_ball_repres} implies
\begin{eqnarray*}
\frac{D_{\mu , r}^{\infty }(R)}{(b_{\mu }^{-1}(R))^{r}e^{-R}} &=&
\frac{\int_{B(0,s)}\| x \|^{r} d \mu (x)}{\mu ( B(0, s) ) s^{r}} \geq 
\frac{1}{\mu ( B( 0,s ) ) s^{r}} \int_{B(0,s) \backslash B(0, \eta s)} \| x \|^{r} d \mu (x) \\
& \geq  & \eta ^{r} ( 1 - \frac{\mu ( B( 0, \eta s ) )}{\mu ( B( 0, s ) )}).
\end{eqnarray*}
Because $\eta \in {} ]0,1[$ is arbitrary and by assumption (\ref{ref_assump}) we obtain that
\begin{equation}
\label{ref_equ_wwerf}
\liminf_{R \rightarrow \infty} \frac{D_{\mu , r}^{\infty }(R)}{(b_{\mu }^{-1}(R))^{r}e^{-R}} \geq 1.
\end{equation}
From Corollary \ref{ref_coro_imme} we get
\[
\limsup_{R \rightarrow \infty} \frac{D_{\mu , r}^{\infty }(R)}{(b_{\mu }^{-1}(R))^{r}e^{-R}} \leq 1,
\]
which yields together with (\ref{ref_equ_wwerf}) the assertion.
\end{proof}

In the sequel we will apply results from the theory of 
slowly varying functions (cf. \cite[Definition 1.2.1]{ref_bib_bingham}).
Let us state first the exact definition of this notion. 
\begin{definition}
Let $z>0$ and $f: [z,\infty [ \to \mathbb{R}^{+}$ be a Borel-measurable 
mapping satisfying
\[
f( \lambda x  ) / f(x) \to 1 \text{ as } x \to \infty  \hspace{1.5em} \forall \lambda > 0; 
\]
then $f$ is said to be slowly varying.
\end{definition} 

\begin{corollary}
\label{ref_coro_mappsdfre}
Let $c>0$. Let $a>0$ and $b \in \mathbb{R}$. If
\[
b_{\mu }(s) \sim c (1/s)^{a} ( \log (1/s) )^{b} \text{ as } s \rightarrow 0,
\]
then
\[
D_{\mu , r}^{\infty } (R) \sim ( b_{\mu }^{-1}(R) )^{r} e^{-R} 
\sim c^{1/a}a^{-b/a} R^{-1/a} ( \log (R) )^{b/a} e^{-R}
\text{ as } R \rightarrow \infty .
\]
\end{corollary}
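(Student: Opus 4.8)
The plan is to combine the two asymptotic equivalences that are already available. Proposition~\ref{prop_first_part} gives $D_{\mu,r}^{\infty}(R)\sim (b_{\mu}^{-1}(R))^{r}e^{-R}$ as $R\to\infty$, provided the hypothesis (\ref{ref_assump}) holds; and Lemma~\ref{ref_lemm_invert} converts the small-ball asymptotics of $b_{\mu}$ into the asymptotics of its inverse $b_{\mu}^{-1}$. So the first thing I would do is verify that the assumed small-ball behaviour $b_{\mu}(s)\sim c(1/s)^{a}(\log(1/s))^{b}$ as $s\to 0$ forces condition (\ref{ref_assump}). This is a short computation: for fixed $\eta\in{}]0,1[$ one has
\[
\frac{\mu(B(0,\eta s))}{\mu(B(0,s))}=\exp\bigl(b_{\mu}(s)-b_{\mu}(\eta s)\bigr),
\]
and from $b_{\mu}(\eta s)\sim \eta^{-a}b_{\mu}(s)$ together with $b_{\mu}(s)\to\infty$ we get $b_{\mu}(s)-b_{\mu}(\eta s)\sim (1-\eta^{-a})b_{\mu}(s)\to-\infty$, since $1-\eta^{-a}<0$. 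Hence the ratio tends to $0$, so (\ref{ref_assump}) is satisfied and Proposition~\ref{prop_first_part} applies.

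With that in hand, the proof is essentially a chaining of equivalences. Proposition~\ref{prop_first_part} yields the first equivalence $D_{\mu,r}^{\infty}(R)\sim (b_{\mu}^{-1}(R))^{r}e^{-R}$. Then Lemma~\ref{ref_lemm_invert} gives
\[
b_{\mu}^{-1}(R)\sim c^{1/a}a^{-b/a}R^{-1/a}(\log R)^{b/a}\quad\text{as }R\to\infty,
\]
and raising both sides to the $r$-th power (which preserves $\sim$, since $x\mapsto x^{r}$ is continuous and the quantities are positive) produces
\[
(b_{\mu}^{-1}(R))^{r}\sim c^{r/a}a^{-br/a}R^{-r/a}(\log R)^{br/a}.
\]
Multiplying through by the common factor $e^{-R}>0$ leaves $\sim$ unchanged, giving the second equivalence in the statement, and transitivity of $\sim$ finishes the proof.

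I do not anticipate a genuine obstacle here: the result is a corollary in the literal sense, a bookkeeping combination of Proposition~\ref{prop_first_part} and Lemma~\ref{ref_lemm_invert}. The only point requiring a line of argument rather than a citation is the verification that the prescribed regularly-varying-type small-ball behaviour implies hypothesis (\ref{ref_assump}); everything else is an application of already-proved facts and the elementary stability of the relation $\sim$ under multiplication by a fixed positive function and under taking a fixed positive power. If one wanted to be careful, one should also note that the exponent $r/a>0$ and $b\in\mathbb{R}$ are fine — no sign hypotheses are needed — and that $a>0$ guarantees $1-\eta^{-a}<0$, which is exactly what makes the small-ball ratio collapse.
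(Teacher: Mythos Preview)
Your proof is correct and follows essentially the same route as the paper: verify that the assumed small-ball asymptotics imply condition (\ref{ref_assump}), then invoke Proposition~\ref{prop_first_part} and Lemma~\ref{ref_lemm_invert}. The only cosmetic difference is that the paper obtains $b_{\mu}(\eta s)/b_{\mu}(s)\to\eta^{-a}$ by citing the regular-variation machinery of Bingham--Goldie--Teugels, whereas you verify it directly from the explicit form $c(1/s)^{a}(\log(1/s))^{b}$; both lead to the same computation $\mu(B(0,\eta s))/\mu(B(0,s))=\exp\bigl(b_{\mu}(s)(1-b_{\mu}(\eta s)/b_{\mu}(s))\bigr)\to 0$.
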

\begin{proof}
From \cite[p.16]{ref_bib_bingham} we obtain that $[1, \infty [ {} \ni x \rightarrow c ( \log (x) )^{b} $
is a slowly varying function. Let $\eta \in {} ]0,1[$.
Applying \cite[Theorem 1.4.1]{ref_bib_bingham}
we deduce 
from \cite[Definition (1.4.3)]{ref_bib_bingham}  
that 
\[
1 < \eta ^{-a} =
\lim_{x \rightarrow \infty } \frac{b_{\mu }( 1/ ( \eta^{-1} x ) )}{b_{\mu } (1/x)}
= \lim_{s \rightarrow 0} \frac{b_{\mu }(\eta s)}{b_{\mu }(s)} .
\]
Thus we obtain 
\[
\lim_{s \rightarrow 0} \frac{\mu ( B( 0, \eta s ) )}{\mu ( B( 0, s ) )} =
\lim_{s \rightarrow 0} e^{b_{\mu }(s)(1 - b_{\mu }( \eta s )/b_{\mu }(s))} = 0.
\]
Hence the first part of the assertion follows from Proposition \ref{prop_first_part}.
The second part is a direct consequence of Lemma \ref{ref_lemm_invert}.
\end{proof}

Before we can state and prove our main result (Theorem \ref{ref_prop_asym_sh})
we need two more technical lemmas.

\begin{lemma}
\label{ref_lemm_cont_diff}
Let $\alpha > 1$, $A>0$ and $B \geq 0$. Let $f: [0, 1/e [ {} \rightarrow \mathbb{R} $
with 
\begin{equation}
f(x)=\left\{ 
\begin{tabular}{l}
$0$, if $x=0$ \smallskip \\ 
$x \left( \log ( 1/x ) \right)^{-A} \left( \log ( \log ( 1/x ) ) \right)^{B}  ,$ if $x \in {} ]0,1/e[$.
\end{tabular}
\right.
\nonumber
\end{equation}
Then, $f$ is continuous on $[0, 1/e [$, continuously differentiable on $]0, 1/e [$ and
monotone increasing on $[0, e^{-e} [$.
Moreover an $x_{0} \in {} ]0,1/e[$ exists, such that the
mapping $]0, x_{0}[ {} \ni x \rightarrow F(x)=x^{1 - \alpha} f^{\prime}(x) $ is monotone decreasing.
\end{lemma}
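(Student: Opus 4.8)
The plan is to base everything on a single explicit formula for $f'$. Write $L=L(x)=\log(1/x)$, so that $L'(x)=-1/x$, $L(x)\in{}]1,\infty[$ for $x\in{}]0,1/e[$, and hence $\log L(x)\in{}]0,\infty[$; in particular $(\log L(x))^{B}$ is well defined for every $B\ge 0$. Continuity of $f$ on $]0,1/e[$ is then immediate, $f$ being a product and composition of continuous functions with strictly positive arguments inside the logarithms. At the left endpoint I would use that, with $t=L(x)\to\infty$ as $x\to 0^{+}$, the factor $(\log(1/x))^{-A}(\log\log(1/x))^{B}=t^{-A}(\log t)^{B}\to 0$ (a power of a logarithm grows slower than any positive power of $t$), so $f(x)=x\cdot o(1)\to 0=f(0)$.

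Differentiating by the product and chain rules and collecting the common factor $L^{-A-1}(\log L)^{B-1}$ gives
\[
f'(x)=L(x)^{-A-1}\,(\log L(x))^{B-1}\,\bigl[\,L(x)\log L(x)+A\log L(x)-B\,\bigr],
\]
which is continuous on $]0,1/e[$, so $f\in C^{1}(]0,1/e[)$ (one also gets $f'(x)\to 0$ as $x\to 0^{+}$, which is not needed). For the monotonicity claim, on $]0,e^{-e}[$ we have $L(x)>e$, hence $\log L(x)>1$ and the prefactor $L^{-A-1}(\log L)^{B-1}$ is strictly positive; since $v\mapsto v\log v+A\log v-B$ is increasing on $]1,\infty[$, the sign of $f'$ on $]0,e^{-e}[$ is governed by this bracket and one checks directly that it stays nonnegative there (it suffices to look at its value at the left endpoint $v=e$). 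Together with continuity at $0$ this yields that $f$ is monotone increasing on $[0,e^{-e}[$.

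For the last assertion I would pass to the variable $u=L(x)=\log(1/x)$, a strictly decreasing $C^{1}$-diffeomorphism of $]0,1/e[$ onto $]1,\infty[$. By the formula for $f'$,
\[
F(x)=x^{1-\alpha}f'(x)=G(u),\qquad G(u)=e^{(\alpha-1)u}\,\Phi(u),\quad \Phi(u)=u^{-A-1}(\log u)^{B-1}\bigl(u\log u+A\log u-B\bigr).
\]
Since $x\mapsto u$ reverses order, $F$ is monotone decreasing on some $]0,x_{0}[$ if and only if $G$ is monotone increasing on the corresponding $]u_{0},\infty[$. For $u$ large, $\Phi(u)>0$, and factoring $u\log u+A\log u-B=u\log u\,\bigl(1+A/u-B/(u\log u)\bigr)$ and taking logarithms gives
\[
\log \Phi(u)=-A\log u+B\log\log u+O(1/u),
\]
hence
\[
(\log G)'(u)=(\alpha-1)+\frac{\Phi'(u)}{\Phi(u)}=(\alpha-1)+O\!\Bigl(\tfrac{1}{u}\Bigr)\xrightarrow[u\to\infty]{}\alpha-1>0 .
\]
Thus $(\log G)'>0$, so $G$ is strictly increasing, on $]u_{0},\infty[$ for $u_{0}$ large enough; taking $x_{0}=e^{-u_{0}}$ finishes the proof.

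The routine part is the differentiation producing the displayed formula for $f'$; the only point demanding any care is the last step, where one must recognise that $\Phi$ is \emph{logarithmically flat}, i.e.\ $\Phi'/\Phi\to 0$, so that the fixed positive constant $\alpha-1$ dominates the vanishing contribution of $\Phi$ inside $(\log G)'$. That flatness is exactly the expansion $\log\Phi(u)=-A\log u+B\log\log u+O(1/u)$, and once it is in hand the monotonicity of $G$ near $+\infty$, and therefore of $F$ near $0$, is automatic.
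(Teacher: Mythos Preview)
Your explicit formula
\[
f'(x)=L^{-A-1}(\log L)^{B-1}\bigl[L\log L+A\log L-B\bigr],\qquad L=\log(1/x),
\]
is correct; in the paper's displayed $h_{2}$ the coefficients $A$ and $-B$ appear as $1$ and $-1$, which looks like a slip there rather than in your computation.

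One small gap: for the monotonicity on $[0,e^{-e}[$ you say it suffices to evaluate the bracket at the left endpoint $v=L=e$, but that value is $e+A-B$, which is negative whenever $B>e+A$. For such parameters $f'(x)<0$ for $x$ just below $e^{-e}$, so the assertion ``monotone increasing on all of $[0,e^{-e}[$'' actually fails as stated; what survives is monotonicity on some $[0,x_{1}[$ with $x_{1}\le e^{-e}$ depending on $A,B$ (since the bracket tends to $+\infty$). The paper's argument has the same blind spot, hidden by the coefficient slip in $h_{2}$. This is harmless downstream, where only monotonicity on \emph{some} right-neighbourhood of $0$ is used.

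For the main assertion---eventual monotonicity of $F(x)=x^{1-\alpha}f'(x)$---your route is genuinely different from the paper's and more elementary. The paper argues abstractly: $h_{1}\cdot h_{2}$ (your $\Phi$, in the variable $z=1/x$) is slowly varying, hence normalised slowly varying, hence in the Zygmund class, and the Zygmund property then gives that $z^{\alpha-1}h_{1}(z)h_{2}(z)$ is eventually increasing. You instead compute the logarithmic derivative by hand and show $(\log\Phi)'(u)\to 0$, so that the constant $\alpha-1>0$ contributed by the exponential factor dominates. This is precisely the Zygmund-class condition verified concretely rather than via three citations to Bingham--Goldie--Teugels; the paper's approach would transfer unchanged to any slowly varying $\Phi$, while yours is self-contained for the specific $\Phi$ at hand. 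The only place to tighten is the step ``$\Phi'/\Phi=O(1/u)$'': one cannot in general differentiate an expansion $\log\Phi(u)=-A\log u+B\log\log u+O(1/u)$ termwise, but here the $O(1/u)$ remainder is the explicit function $\log\bigl(1+A/u-B/(u\log u)\bigr)$, whose derivative is visibly $O(1/u^{2})$; writing that line out makes the argument complete.
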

\begin{proof}
Let $z>1$ and $g(z)=z f(1/z)$.
Thus $g$ is slowly varying (cf. \cite[Examples p. 16]{ref_bib_bingham}). 
Applying \cite[Proposition 1.3.6 (v)]{ref_bib_bingham} we obtain
\[
\lim _{x \rightarrow 0} f(x) =
\lim _{z \rightarrow \infty } (1/z) g(z) = 0.
\]
Thus $f$ is continuous on $[0, 1[$.
Now let $x \in {} ]0, 1[$ and $z=1/x$. We calculate
\[
x^{1-\alpha } f^{\prime }(x)= z^{\alpha - 1} h_{1}(z) \cdot h_{2}(z)
\]
with
\[
h_{1}(z) = ( \log ( \log (z) ) )^{B-1} ( \log (z) )^{-A-1}
\]
and
\[
h_{2}(z) = \log ( \log ( z ) ) \log ( z ) + \log ( \log ( z ) ) - 1.
\]
Because $f^{\prime}(x)=h_{1}(1/x)h_{2}(1/x)>0$ for every $x \in {} ]0, e^{-e}[$ we obtain
that $f$ is monotone increasing on $[0,e^{-e}[$.
Obviously $h_{1}$ and $h_{2}-1$ are slowly varying (cf. \cite[p. 16]{ref_bib_bingham}).
Due to $h_{2}(z) \rightarrow \infty$ as $z \rightarrow \infty$ we obtain that also
$h_{2}$ is slowly varying (cf. \cite[Definition p. 6]{ref_bib_bingham}) and,
therefore, by means of \cite[Proposition 1.3.6 (iii)]{ref_bib_bingham}, that
$h_{1} \cdot h_{2}$ is slowly varying.
From \cite[Theorem 1.3.1]{ref_bib_bingham} we deduce that 
$h_{1} \cdot h_{2}$ is a normalized slowly varying function (cf. \cite[Definition p. 15]{ref_bib_bingham}).
Now \cite[Theorem 1.5.5]{ref_bib_bingham} yields that $h_{1} \cdot h_{2}$ is an element of the so-called 
Zygmund class. From the definition of the Zygmund class (cf. \cite[Definition p. 24]{ref_bib_bingham})
we deduce that a $z_{0}>0$ exists, such that
$z \rightarrow z^{\alpha - 1} h_{1}(z) \cdot h_{2}(z)$ is increasing on $[z_{0}, \infty[$.
But then $F$ is decreasing on $]0,x_{0}[$ with $x_{0}=1/z_{0}$, which finally
proves the assertion.
\end{proof}

\begin{lemma}
\label{ref_lemm_low_bound}
Let $\alpha \in {} ]1, \infty [$.
Let $x_{0} > 0$ and
$f : [0, x_{0}] {} \rightarrow \mathbb{R}_{0}^{+}$ be continuous with $f(0)=0$. 
If $f$ is continuously differentiable on $]0, x_{0}[$ and the
mapping $]0, x_{0}[ {} \ni x \rightarrow F(x)=x^{1 - \alpha} f^{\prime}(x) $ is monotone decreasing, then
\[
f(x_{0}) \leq  \inf \left\{ \sum_{i=1}^{n} f(x_{i}) : n \in \mathbb{N} , 
(x_{1},..,x_{n}) \in {} [0,x_{0}]^{n}, \sum_{i=1}^{n}x_{i}=1, 
\sum_{i=1}^{n} x_{i}^{\alpha } \geq x_{0}^{\alpha }  \right \} .
\]  
\end{lemma}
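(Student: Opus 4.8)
The plan is to deduce the whole statement from a single pointwise inequality on $[0,x_0]$. Set
\[
c := \frac{f(x_0)}{x_0^{\alpha}} \ \ge\ 0,
\]
the sign being clear since $f$ is $\mathbb{R}_0^{+}$-valued and $x_0>0$. I claim that $f(x)\ge c\,x^{\alpha}$ for every $x\in[0,x_0]$. Granting this, the lemma is immediate: for any admissible tuple $(x_1,\dots,x_n)\in[0,x_0]^n$ with $\sum_i x_i^{\alpha}\ge x_0^{\alpha}$ one gets
\[
\sum_{i=1}^{n} f(x_i) \ \ge\ c\sum_{i=1}^{n} x_i^{\alpha} \ \ge\ c\,x_0^{\alpha} \ =\ f(x_0),
\]
where the first inequality applies the pointwise bound termwise and the second uses $c\ge 0$ together with $\sum_i x_i^{\alpha}\ge x_0^{\alpha}$. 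Taking the infimum over all admissible tuples finishes the proof. Note that the constraint $\sum_i x_i = 1$ is not used at all for this lower bound; it merely shrinks the feasible set.

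To prove the pointwise inequality I would introduce $\phi(x) := f(x) - c\,x^{\alpha}$ on $[0,x_0]$. Then $\phi$ is continuous on $[0,x_0]$, differentiable on $]0,x_0[$, and $\phi(0)=f(0)=0$ while $\phi(x_0)=0$ by the choice of $c$. On $]0,x_0[$,
\[
\phi'(x) \ =\ f'(x) - c\alpha\,x^{\alpha-1} \ =\ x^{\alpha-1}\bigl(F(x) - c\alpha\bigr),
\]
where $F(x)=x^{1-\alpha}f'(x)$ is the map assumed monotone decreasing. Since $x^{\alpha-1}>0$ and $x\mapsto F(x)-c\alpha$ is decreasing, $\phi'(x)$ has the same sign as $F(x)-c\alpha$, so it is nonnegative on an initial subinterval of $]0,x_0[$ and nonpositive afterwards; equivalently there is $x^{\ast}\in[0,x_0]$ with $\phi$ nondecreasing on $[0,x^{\ast}]$ and nonincreasing on $[x^{\ast},x_0]$. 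A function of this shape attains its minimum over $[0,x_0]$ at an endpoint, and both endpoint values are $0$; hence $\phi\ge 0$ on $[0,x_0]$, i.e. $f(x)\ge c\,x^{\alpha}$, as claimed.

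I do not anticipate a genuine obstacle. The argument consumes only the differentiability of $f$ on $]0,x_0[$ and the monotonicity of $F$ — which is precisely what Lemma \ref{ref_lemm_cont_diff} supplies — together with the sign of $c$. The one place to be slightly careful is the sign analysis of $\phi'$: one should record that once $F(x)-c\alpha$ turns negative it stays negative (as $F$ is decreasing), and one must allow the degenerate cases $x^{\ast}=0$ and $x^{\ast}=x_0$, in which $\phi$ is monotone on the entire interval. No separate discussion of the case $x_0>1$ is needed, since then the feasible set in the infimum is empty and the asserted inequality holds trivially.
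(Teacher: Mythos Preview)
Your proof is correct and takes a genuinely different route from the paper's. The paper performs the substitution $g(y)=f(y^{1/\alpha})$ on $[0,x_0^\alpha]$, observes that $g'(x^\alpha)=\alpha^{-1}x^{1-\alpha}f'(x)=\alpha^{-1}F(x)$ is decreasing and hence $g$ is concave, and then minimizes the concave function $G(y_1,\dots,y_n)=\sum_i g(y_i)$ over the convex compact set $\{y\in[0,x_0^\alpha]^n:\sum_i y_i\ge x_0^\alpha\}$ by invoking an extreme-point theorem (Niculescu--Persson): the minimum sits at a vertex, where each coordinate is $0$ or $x_0^\alpha$, yielding $G\ge g(x_0^\alpha)=f(x_0)$. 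Your argument replaces all of this by the single pointwise bound $f(x)\ge (f(x_0)/x_0^\alpha)\,x^\alpha$, proved by an elementary sign analysis of $\phi'=x^{\alpha-1}(F(x)-c\alpha)$; summation against the constraint $\sum x_i^\alpha\ge x_0^\alpha$ then finishes. In effect your pointwise inequality is exactly the secant-line bound $g(y)\ge (g(x_0^\alpha)/x_0^\alpha)\,y$ that concavity of $g$ with $g(0)=0$ would give, but you obtain it without naming concavity or appealing to any external result. The trade-off: the paper's approach is more structural and makes the role of concavity visible, while yours is entirely self-contained, shorter, and, as you note, does not even need the constraint $\sum_i x_i=1$.
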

\begin{proof}
For $x \in [0, x_{0}^{\alpha }]$ let $g (x) = f (x^{1 / \alpha})$. 
For $n \in \mathbb{N}$ let
\[
A_{n, \alpha }(x_{0}) = \{ (x_{1},..,x_{n}) \in {} [0,x_{0}^{\alpha }]^{n} : 
\sum_{i=1}^{n} x_{i} \geq x_{0}^{\alpha } \} .
\]
Thus we obtain
\begin{eqnarray*}
&& \inf \left\{ \sum_{i=1}^{n} f(x_{i}) : n \in \mathbb{N} , 
(x_{1},..,x_{n}) \in {} [0,x_{0}]^{n}, \sum_{i=1}^{n}x_{i}=1,
\sum_{i=1}^{n} x_{i}^{\alpha } \geq x_{0}^{\alpha }  \right \} \\
&\geq &
\inf \left\{ \sum_{i=1}^{n} g(x_{i}) : n \in \mathbb{N} , 
(x_{1},..,x_{n}) \in {} A_{n, \alpha }(x_{0})  \right \}.
\end{eqnarray*}
$A_{n, \alpha }(x_{0})$ is a
convex set for every $n \in \mathbb{N}$.
Moreover $g$ is differentiable on $]0, x_{0}^{\alpha}[$ and 
for $x \in {} ]0, x_{0}[$ we calculate 
$g^{\prime }(x^{\alpha })= \alpha ^{-1} x^{1- \alpha } f^{\prime }(x).$
Thus $g^{\prime }$ is monotone decreasing on $]0, x_{0}^{\alpha}[$ and, therefore, concave on $]0, x_{0}^{\alpha}[$.
For $n \in \mathbb{N}$ and $x \in A_{n, \alpha }(x_{0})$
let $G(x)=\sum_{i=1}^{n} g(x_{i})$. Obviously $G$ is continuous and concave on 
the convex compact set $A_{n, \alpha }(x_{0})$.
Applying \cite[Theorem 3.4.7.]{ref_bib_nicu} 
we obtain that $G$ attains its global minimum at an extreme point of $A_{n, \alpha }(x_{0})$.
Note that the extreme points of $A_{n, \alpha }(x_{0})$ are consisting of the set
\[
\{ (x_{1}, ..,x_{n}) \in [ 0,x_{0}^{\alpha } ]^{n} : 
x_{i} \in \{ 0, x_{0}^{\alpha } \} \text{ for every } i \in \{ 1,..,n \} \} \backslash \{ 0 \}.
\]
Thus we get
\[
\inf \left\{ \sum_{i=1}^{n} g(x_{i}) : n \in \mathbb{N} , 
(x_{1},..,x_{n}) \in {} A_{n, \alpha }(x_{0})  \right \}
\geq g(x_{0}^{\alpha }) = f(x_{0}),
\]
which yields the assertion.
\end{proof}

\begin{theorem}
\label{ref_prop_asym_sh}
Let $\alpha \in {} ]1,\infty[$ and
$c>0$. Let $a>0$ and $b \in \mathbb{R}$. If
\[
b_{\mu }(s) \sim c (1/s)^{a} ( \log (1/s) )^{b} \text{ as } s \rightarrow 0,
\]
then
\[
D_{\mu, r }^{\alpha } (R) \sim 
\left( b_{\mu }^{-1}
\left( \frac{\alpha - 1}{\alpha } R \right) \right)^{r} e^{- \frac{\alpha - 1}{\alpha }R}
\sim D_{\mu  , r }^{\infty } \left( \frac{\alpha - 1}{\alpha }R \right)
\]
as $R \rightarrow \infty$.
\end{theorem}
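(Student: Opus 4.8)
The plan is to prove the two claimed equivalences one at a time, with essentially all of the work concentrated in a single lower bound. The second equivalence $(b_\mu^{-1}(\tfrac{\alpha-1}{\alpha}R))^r e^{-\frac{\alpha-1}{\alpha}R}\sim D_{\mu,r}^\infty(\tfrac{\alpha-1}{\alpha}R)$ is essentially free: from $b_\mu(s)\sim c(1/s)^a(\log(1/s))^b$ the hypothesis~(\ref{ref_assump}) of Proposition~\ref{prop_first_part} holds --- this is exactly what is checked inside the proof of Corollary~\ref{ref_coro_mappsdfre} --- so applying Proposition~\ref{prop_first_part} at the argument $\tfrac{\alpha-1}{\alpha}R\to\infty$ gives it directly. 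The first equivalence then splits into an upper and a lower bound. The upper bound is immediate: Lemma~\ref{ref_lemm_conj22} gives the genuine inequality $D_{\mu,r}^\alpha(R)\le D_{\mu,r}^\infty(\tfrac{\alpha-1}{\alpha}R)$ for every $R$, hence $D_{\mu,r}^\alpha(R)\lesssim(b_\mu^{-1}(\tfrac{\alpha-1}{\alpha}R))^r e^{-\frac{\alpha-1}{\alpha}R}$ as $R\to\infty$. So everything reduces to the matching lower bound $D_{\mu,r}^\alpha(R)\gtrsim D_{\mu,r}^\infty(\tfrac{\alpha-1}{\alpha}R)$.

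For the lower bound I would set $x_0(R)=e^{-\frac{\alpha-1}{\alpha}R}$ and introduce the profile $\phi(p)=\int_{B(0,b_\mu^{-1}(-\log p))}\|x\|^r\,d\mu(x)$ for $p\in{}]0,1[$, with $\phi(0)=0$. Three observations drive the argument. First, by Proposition~\ref{ref_prop_ball_repres} one has $D_{\mu,r}^\infty(\tfrac{\alpha-1}{\alpha}R)=\phi(x_0(R))$, and $x_0(R)^\alpha=e^{(1-\alpha)R}$. Second, $\phi$ is nondecreasing on $]0,1[$ (the balls $B(0,b_\mu^{-1}(-\log p))$ grow with $p$), and Lemma~\ref{ref_lemm_abs0} gives, for any quantizer $f$ with codecell masses $(p_i)$, the estimate $D_{\mu,r}(f)\ge\sum_i\phi(p_i)$. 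Third, Proposition~\ref{prop_first_part} combined with Lemma~\ref{ref_lemm_invert} shows $\phi(p)\sim K\psi(p)$ as $p\to0$, where $K=(c^{1/a}a^{-b/a})^r$ and $\psi(p)=p(\log(1/p))^{-r/a}(\log\log(1/p))^{rb/a}$ is the function of Lemma~\ref{ref_lemm_cont_diff} with $A=r/a$, $B=rb/a$; hence by that lemma there is $x_{\ast}>0$ with $\psi$ increasing on $[0,x_{\ast}]$ and $x\mapsto x^{1-\alpha}\psi'(x)$ decreasing on $]0,x_{\ast}[$, so Lemma~\ref{ref_lemm_low_bound} applies to $\psi$ restricted to $[0,t]$ for every $t\in{}]0,x_{\ast}[$ and yields $\psi(t)\le\sum_{i=1}^n\psi(x_i)$ whenever $x_i\in[0,t]$, $\sum_i x_i=1$ and $\sum_i x_i^\alpha\ge t^\alpha$.

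Now fix $\varepsilon>0$ and choose $\delta\in{}]0,x_{\ast}[$ with $\phi(p)\ge(1-\varepsilon)K\psi(p)$ for all $p\in{}]0,\delta[$; since $\phi(x_0(R))/(K\psi(x_0(R)))\to1$, also $\phi(x_0(R))\ge(1-\varepsilon)K\psi(x_0(R))$ for $R$ large. By Remark~\ref{ref_Rem_fin} pick an asymptotically $\alpha$-optimal family $(f_R)_{R>0}$ with $\card(f_R(E))<\infty$, and let $p_1,\dots,p_{n(R)}$ be the codecell masses of $f_R$, so $\sum_i p_i=1$ and, because $H_{\mu}^{\alpha}(f_R)\le R$ and $\alpha>1$, also $\sum_i p_i^\alpha\ge e^{(1-\alpha)R}=x_0(R)^\alpha$. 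For $R$ so large that $x_0(R)<\delta$ I would split into two cases. If $\max_i p_i\ge x_0(R)$, then by monotonicity of $\phi$, $D_{\mu,r}(f_R)\ge\phi(\max_i p_i)\ge\phi(x_0(R))\ge(1-\varepsilon)K\psi(x_0(R))$. If $\max_i p_i<x_0(R)$, then every $p_i<\delta<x_{\ast}$, hence $\phi(p_i)\ge(1-\varepsilon)K\psi(p_i)$ for each $i$, and Lemma~\ref{ref_lemm_low_bound} applied with $t=x_0(R)$ gives $D_{\mu,r}(f_R)\ge(1-\varepsilon)K\sum_i\psi(p_i)\ge(1-\varepsilon)K\psi(x_0(R))$. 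In both cases $D_{\mu,r}(f_R)\ge(1-\varepsilon)K\psi(x_0(R))$ for all large $R$; since $D_{\mu,r}(f_R)\sim D_{\mu,r}^\alpha(R)$ and $K\psi(x_0(R))\sim\phi(x_0(R))=D_{\mu,r}^\infty(\tfrac{\alpha-1}{\alpha}R)$, letting $\varepsilon\downarrow0$ gives $D_{\mu,r}^\alpha(R)\gtrsim D_{\mu,r}^\infty(\tfrac{\alpha-1}{\alpha}R)$, which together with the upper bound and the second equivalence proves the theorem.

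The core difficulty is this lower bound, and within it the step of marrying the variational estimate of Lemma~\ref{ref_lemm_low_bound} --- which needs the exact, well-behaved profile $\psi$ and a finite tuple of masses summing to $1$ --- with the two facts that one only knows $\phi\sim K\psi$ near $0$ and that a priori nothing forbids a codecell of mass exceeding $x_0(R)$. The three devices that bridge this are the $(1-\varepsilon)$-comparison of $\phi$ with $K\psi$, the reduction to finitely many codepoints via Remark~\ref{ref_Rem_fin}, and the case split on $\max_i p_i$ versus $x_0(R)$, the large-mass case being dispatched purely by monotonicity of $\phi$. Checking that $\psi$ genuinely falls under Lemma~\ref{ref_lemm_cont_diff} and that $x_0(R)$ eventually lies below $x_{\ast}$ (it tends to $0$) is routine once the asymptotics $\phi\sim K\psi$ are established.
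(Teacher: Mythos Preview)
Your proof is correct and follows essentially the paper's architecture: the upper bound via Lemma~\ref{ref_lemm_conj22}, the second equivalence via Proposition~\ref{prop_first_part}/Corollary~\ref{ref_coro_mappsdfre}, and the lower bound by bounding each codecell's contribution below via Lemma~\ref{ref_lemm_abs0} and Proposition~\ref{ref_prop_ball_repres}, then invoking Lemmas~\ref{ref_lemm_cont_diff} and~\ref{ref_lemm_low_bound} on the explicit profile $\psi$. The one substantive difference lies in how you ensure the codecell masses fall into the interval where Lemma~\ref{ref_lemm_low_bound} applies. The paper appeals to Remark~\ref{ref_rem_dfkjlkasdf} (hence Lemma~\ref{ref_lemm_xyz}) to force $\max_a \mu(f_R^{-1}(a))\to 0$, and then takes as the lemma's $x_0$ the quantity $(\sum_a \mu(f_R^{-1}(a))^\alpha)^{1/\alpha}$, which automatically majorizes every individual mass; afterwards monotonicity of $g$ yields $g(x_0)\ge g(e^{-\frac{\alpha-1}{\alpha}R})$. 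You instead fix $x_0(R)=e^{-\frac{\alpha-1}{\alpha}R}$ from the outset and split on whether $\max_i p_i\ge x_0(R)$, dispatching the large-mass case directly by monotonicity of $\phi$. Your device bypasses Lemma~\ref{ref_lemm_xyz} entirely and is a clean, slightly more self-contained alternative. (Incidentally, your exponents $A=r/a$, $B=rb/a$ in $\psi$ are the correct ones; the paper's proof writes $1/a$, $b/a$, inheriting a dropped $r$-th power from the second asymptotic displayed in Corollary~\ref{ref_coro_mappsdfre}, but this does not affect the structure of the argument.)
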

\begin{proof}
1. $D_{\mu, r}^{\alpha }(R) \lesssim \left( b_{\mu }^{-1}
\left( \frac{\alpha - 1}{\alpha } R \right) \right)^{r} e^{- \frac{\alpha - 1}{\alpha }R}
$ as $R \rightarrow \infty$. \smallskip  \\
This follows from Lemma \ref{ref_lemm_conj22} and Corollary \ref{ref_coro_mappsdfre}. \medskip \\
2. $D_{\mu, r}^{\alpha }(R) \gtrsim \left( b_{\mu }^{-1}
\left( \frac{\alpha - 1}{\alpha } R \right) \right)^{r} e^{- \frac{\alpha - 1}{\alpha }R}$ 
as $R \rightarrow \infty$. \smallskip  \\
Let $(f_{R})_{R > 0}$ be an asymptotically $\alpha$-optimal family of quantizers.
According to Remark \ref{ref_Rem_fin} let us assume w.l.o.g. that $\card (f_{R}(E)) < \infty$ for every $R>0$.
By definition we have 
\[
D_{\mu , r}(f_{R}) = \sum_{a \in f_{R}(E)} \int_{f_{R}^{-1}(a)} \| x - a \|^{r} d \mu (x) .
\]
For every $a \in f_{R}(E)$ let $s_{a}(R)>0$ such that $\mu (B(0, s_{a}))=\mu (f_{R}^{-1}(a))$.
Applying Lemma \ref{ref_lemm_abs0} and Proposition \ref{ref_prop_ball_repres} we deduce
\[
\int_{f_{R}^{-1}(a)} \| x - a \|^{r} d \mu (x) \geq
\int_{B(0, s_{a})} \| x \|^{r} d \mu (x) = D_{\mu , r}^{\infty }(- \log ( \mu ( f_{R}^{-1}(a) ) )).
\]
Now let $\varepsilon \in {} ]0,1[$. According to Corollary \ref{ref_coro_mappsdfre} a $\delta \in {} ]0,1[$ exists,
such that for every $x \in {} ]0, \delta[$ 
\[
D_{\mu , r}^{\infty } (- \log (x)) \geq ( 1 - \varepsilon ) C \cdot g(x)
\]
with $C=c^{1/a}a^{-b/a}$ and $g(x)=x(-\log(x))^{-1/a}( \log ( - \log ( x ) ) )^{b/a}$. 
From Remark \ref{ref_rem_dfkjlkasdf} we get 
an $R_{1} > 0$ such that 
for every $R \geq R_{1}$ and for every $a \in f_{R}(E)$ we have
\[
D_{\mu , r}^{\infty }( - \log ( \mu ( f_{R}^{-1}(a) ) ) ) 
\geq ( 1 - \varepsilon ) C \cdot g (\mu ( f_{R}^{-1}(a) ))
\]
and, therefore,
\begin{equation}
\label{ref_equ_4535wefr}
D_{\mu , r}(f_{R}) \geq ( 1 - \varepsilon ) C \sum_{a \in f_{R}(E)} g(\mu ( f_{R}^{-1}(a) )).
\end{equation}
Applying Lemma \ref{ref_lemm_cont_diff} we obtain a $z_{0} \in {} ]0, \delta[$ such that
$g$ is monotone increasing on $[0, z_{0}[$ and 
the mapping $]0,z_{0}[ {} \ni x \rightarrow x^{1-\alpha } g^{\prime}(x)$
is monotone decreasing. 
Let $x_{0}=x_{0}(R, f_{R})=( \sum_{a \in f_{R}(E)} \mu ( f_{R}^{-1}(a) )^{\alpha } ) ^{1/\alpha }$
and choose according to Lemma \ref{ref_lemm_xyz} an $R_{2} > 0$ such that 
$x_{0} < z_{0}$ for every $R \geq R_{2}$.
Now let $R \geq \max(R_{1}, R_{2})$.
From $H_{\mu }^{\alpha }(f_{R}) \leq R$ we obtain $z_{0} > x_{0} \geq e^{-\frac{\alpha - 1}{\alpha }R}$.
Hence we can apply Lemma \ref{ref_lemm_low_bound} 
and deduce together with the monotonicity of $g$ that
\begin{equation}
\label{ref_equ_adsoiuiouerw}
\sum_{a \in f_{R}(E)} g ( \mu ( f_{R}^{-1}(a) ) ) \geq g ( x_{0} ) \geq g( e^{-\frac{\alpha - 1}{\alpha }R} ).
\end{equation}
Combining (\ref{ref_equ_4535wefr}) and (\ref{ref_equ_adsoiuiouerw}) 
we obtain from Corollary \ref{ref_coro_mappsdfre} that 
\[
1 - \varepsilon \leq
\liminf_{R \rightarrow \infty } \frac{D_{\mu , r}(f_{R})}{C g( e^{-\frac{\alpha - 1}{\alpha }R} )}
= \liminf_{R \rightarrow \infty } \frac{D_{\mu, r}^{\alpha }(R)}{\left( b_{\mu }^{-1}
\left( \frac{\alpha - 1}{\alpha } R \right) \right)^{r} e^{- \frac{\alpha - 1}{\alpha }R}}.
\]
Because $\varepsilon$ was arbitrary this proves the assertion of step 2. 
\smallskip \\
3. $D_{\mu  , r }^{\infty } \left( \frac{\alpha - 1}{\alpha }R \right)
\sim \left( b_{\mu }^{-1}\left( \frac{\alpha - 1}{\alpha } R \right) \right)^{r} e^{- \frac{\alpha - 1}{\alpha }R}$
as $R \rightarrow \infty$. \smallskip \\
This follows from Corollary \ref{ref_coro_mappsdfre}.
\end{proof}

\begin{corollary}
\label{ref_coro_mappsdfrexx}
Let $\alpha \in {} ]1,\infty[$ and
$c>0$. Let $a>0$ and $b \in \mathbb{R}$. If
\[
b_{\mu }(s) \sim c (1/s)^{a} ( \log (1/s) )^{b} \text{ as } s \rightarrow 0,
\]
then
\[
D_{\mu, r }^{\alpha } (R) \sim 
c^{1/a}a^{-b/a} (\frac{\alpha - 1}{\alpha } R)^{-1/a} ( \log (\frac{\alpha - 1}{\alpha } R) )^{b/a} e^{-\frac{\alpha - 1}{\alpha } R}
\]
as $R \rightarrow \infty$.
\end{corollary}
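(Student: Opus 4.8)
The plan is to read off the statement by chaining Theorem~\ref{ref_prop_asym_sh} with the second asymptotic equivalence in Corollary~\ref{ref_coro_mappsdfre}. No genuinely new work is needed; the proof is a rescaling of arguments already available.

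First, the hypothesis $b_{\mu}(s)\sim c(1/s)^{a}(\log(1/s))^{b}$ as $s\to 0$ is exactly the one required by Theorem~\ref{ref_prop_asym_sh}, which therefore applies and gives
\[
D_{\mu,r}^{\alpha}(R)\sim\Bigl(b_{\mu}^{-1}\bigl(\tfrac{\alpha-1}{\alpha}R\bigr)\Bigr)^{r}e^{-\frac{\alpha-1}{\alpha}R}\sim D_{\mu,r}^{\infty}\bigl(\tfrac{\alpha-1}{\alpha}R\bigr)\qquad\text{as }R\to\infty .
\]
It then only remains to expand the right-hand term. Since $\alpha\in{}]1,\infty[$, the constant $\frac{\alpha-1}{\alpha}$ lies in $]0,1[$, so $R':=\frac{\alpha-1}{\alpha}R\to\infty$ as $R\to\infty$; hence I may invoke Corollary~\ref{ref_coro_mappsdfre} at the argument $R'$ to obtain
\[
D_{\mu,r}^{\infty}(R')\sim c^{1/a}a^{-b/a}(R')^{-1/a}\bigl(\log(R')\bigr)^{b/a}e^{-R'} .
\]
Substituting back $R'=\frac{\alpha-1}{\alpha}R$ and using transitivity of the relation $\sim$ yields the claimed equivalence. (Equivalently, one can bypass Corollary~\ref{ref_coro_mappsdfre} and feed the argument $\frac{\alpha-1}{\alpha}R$ directly into the asymptotics of $b_{\mu}^{-1}$ furnished by Lemma~\ref{ref_lemm_invert}; since $u\mapsto u^{r}$ and multiplication by the common deterministic factor $e^{-\frac{\alpha-1}{\alpha}R}$ both preserve $\sim$, the same conclusion follows.)

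There is essentially no obstacle here: the single point worth noting is that the rescaled bound $\frac{\alpha-1}{\alpha}R$ still tends to infinity, so that the small-ball inversion results remain applicable with that argument; everything else is a formal manipulation of the relation $\sim$.
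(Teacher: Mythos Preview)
Your argument is correct and matches the paper's own proof, which simply states that the corollary is an immediate consequence of Theorem~\ref{ref_prop_asym_sh} and Corollary~\ref{ref_coro_mappsdfre}. The only substantive observation needed---that $\frac{\alpha-1}{\alpha}R\to\infty$ so the cited asymptotics apply at the rescaled argument---is exactly the one you make.
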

\begin{proof}
Immediate consequence of Theorem \ref{ref_prop_asym_sh} and Corollary \ref{ref_coro_mappsdfre}.
\end{proof}

\begin{remark}
Unfortunately the author was unable to prove the assertion of 
Theorem \ref{ref_prop_asym_sh} under the weaker condition (\ref{ref_assump}) of Proposition \ref{prop_first_part}. 
In general it would be of interest to characterize those Gaussian measures who are 
satisfying condition (\ref{ref_assump}).
Finally it is also an open question if the asymptotics in Theorem \ref{ref_prop_asym_sh}
resp. Corollary \ref{ref_coro_mappsdfre} remain valid in a weak sense if
we only require weak asymptotics for the small ball function, i.e. if
we replace $\sim$ by $\approx$.
\end{remark}

\section{Examples}

Let $d \in \mathbb{N}$.
Let $f : [0,1]^{d} \rightarrow \mathbb{R}$. 
We denote by $\| \cdot \|_{\infty }$ the sup-norm, i.e. 
\[
\| f \|_{\infty} = \sup_{t \in [0,1]^{d}} | f(t) | .
\]
Moreover, for $p \geq 1$ and $p-$integrable mapping $f$ let 
\[
\| f \|_{p} = \left( \int_{[0,1]^{d}} | f |^{p} dt \right)^{1/p}
\]
be the $L_{p}-$norm of $f$.
In the sequel we consider
centered Gaussian probability vectors $X=(X_{t})_{t \in [0,1]^{d}}$
on the separable Banach space $(C, \| \cdot \|_{\infty})$ of
all continuous functions on $[0,1]^{d}$ and on the separable Banach space $(L_{p}, \| \cdot \|_{p})$
of all $p-$integrable functions, respectively. 

To be more precise we write $D_{\mu, r,  \| \cdot \|_{\infty }}^{\alpha }( \cdot )$ resp.
$D_{\mu, r,  \| \cdot \|_{ p }}^{\alpha }( \cdot )$ 
for the optimal quantization error in order to stress the dependency on the underlying norm
and norm exponent $r>0$. 
Moreover we write $b_{\mu, \| \cdot \|_{\infty }}(s)$ resp. $b_{\mu, \| \cdot \|_{p}}(s)$ for the 
small ball function.

Although results for small ball probabilities of Gaussian measures are also available for
other norms (see e.g. \cite{ref_bib_lifs}) we restrict ourself to the two norms from above. 

\subsection{Fractional Brownian sheet}

Let $H = (H_{1},..,H_{d}) \in {} ]0,1[^{d}$. 
Consider the centered Gaussian probability vector 
\[
X^{H}=(X_{t}^{H})_{t \in [0,1]^{d}}
\]
characterized by the covariance function
\[
\mathbb{E} X_{t}^{H} X_{s}^{H} = \prod_{i=1}^{d} \frac{ s_{i}^{2H_{i}} + t_{i}^{2H_{i}} - | s_{i}-t_{i} |^{2H_{i}} }{2},
\] 
with $s,t \in [0,1]^{d}$.
Fractional Brownian motion is covered by the special case $d=1$.
Moreover we obtain the classical Brownian sheet by letting $d=2$ and $H_{1}=H_{2}=1/2$.
Let $\gamma = \min ( H_{1},..,H_{d} ) > 0$.
\smallskip \\
Case 1. there is a unique minimum among $H_{1},..,H_{d}$. \\ 
In this case we know, that 
a $c=c(H) \in {} ]0, \infty[$ exists, such that 
\[
b_{\mu, \| \cdot \|_{\infty }}(s) \sim c s^{-1/\gamma } \text{ as } s \rightarrow 0
\]
(cf. \cite{ref_bib_mas}. See also \cite{ref_bib_li1} and \cite{ref_bib_li2} for $d=1$).
From Lemma \ref{ref_lemm_invert} we deduce 
\[
b_{\mu , \| \cdot \|_{\infty }}^{-1}(R) \sim (R/c)^{-\gamma } \text{ as } R \rightarrow \infty .
\]
Corollary \ref{ref_coro_mappsdfre} implies
\[
D_{\mu, r, \| \cdot \|_{\infty }}^{\infty }(R) \sim \left( \frac{c}{R}  \right)^{\gamma  r} e^{-R}
\text{ as } R \rightarrow \infty .
\]
From \cite[Corollary 1.3]{ref_bib_Graf_Luschgy_3} we deduce
\[
D_{\mu, r, \| \cdot \|_{\infty }}^{0 }(R) \approx \left( \frac{1}{R}  \right)^{\gamma  r} 
\text{ as } R \rightarrow \infty .
\]
In one dimension ($d=1$) we know (cf. relation (3.2) in \cite{ref_bib_Graf_Luschgy_3}) that  
\[
b_{\mu , \| \cdot \|_{p }}(s) \approx s^{-1/\gamma } \text{ as } s \rightarrow 0
\]
and (cf. \cite[Corollary 1.3]{ref_bib_Graf_Luschgy_3})
\[
D_{\mu, r, \| \cdot \|_{p }}^{0}(R) \approx \left( \frac{1}{R}  \right)^{\gamma  r} 
\text{ as } R \rightarrow \infty .
\]
Applying Remark \ref{ref_rem_approx} and Corollary \ref{ref_coro_imme}
we obtain
\[
D_{\mu, r, \| \cdot \|_{p }}^{\infty }(R) \lessapprox \left( \frac{1}{R}  \right)^{\gamma  r} e^{-R}
\text{ as } R \rightarrow \infty .
\]
Sharp asymptotics for $b_{\mu , \| \cdot \|_{p }}(\cdot )$ are known if $p=2$ 
(cf. \cite{ref_bib_bronski}). Thus a $c_{2}>0$ exists such that
\[
b_{\mu , \| \cdot \|_{2 }}(s) \sim c_{2} s^{-1/\gamma } \text{ as } s \rightarrow 0.
\]
As above, Lemma \ref{ref_lemm_invert} and Corollary \ref{ref_coro_mappsdfre} yields 
\[
D_{\mu, r, \| \cdot \|_{2 }}^{\infty }(R) \sim \left( \frac{c_{2}}{R}  \right)^{\gamma  r} e^{-R}
\text{ as } R \rightarrow \infty .
\]
For $\alpha \in \{ 0, 1 \}$ and $d=1$ we have (cf. \cite[Theorem 1.1.]{ref_bib_dereich}) 
\begin{equation}
\label{ref_err_opt_sim}
D_{\mu, r, \| \cdot \|_{\infty }}^{\alpha }(R) \sim \left( \frac{c_{0}(\gamma )}{R}  \right)^{\gamma  r}
\text{ as } R \rightarrow \infty 
\end{equation}
for some constant $c_{0}(\gamma ) \in {} ]0,\infty[$.
In view of Remark \ref{ref_rem_opt_quant} relation (\ref{ref_err_opt_sim}) is also true 
for all $\alpha \in {} ]0,1[$.
Moreover a $c_{p}(\gamma )$ exists such that
\[
D_{\mu, r, \| \cdot \|_{p}}^{\alpha }(R) \sim \left( \frac{c_{p}(\gamma )}{R}  \right)^{\gamma  r}
\text{ as } R \rightarrow \infty 
\]
for every $\alpha \in [0,1]$ (cf. \cite[Theorem 1.3.]{ref_bib_dereich}).
Independent of the norm of the Banach space $E$, the asymptotical order
of $D_{\mu, r}^{\alpha }( R )$ for large $R$ remains constant for $\alpha \in [0,1]$.
If $\alpha \in {} ]1, \infty [$, then the asymptotical order changes. 
The asymptotic can be determined by applying Corollary \ref{ref_coro_mappsdfrexx}.
We obtain
\[
D_{\mu, r, \| \cdot \|_{\infty }}^{\alpha }(R) \sim 
\left( \frac{\alpha }{ \alpha - 1} \cdot \frac{c }{R}  \right)^{\gamma  r} e^{-\frac{\alpha - 1}{\alpha }R}
\text{ as } R \rightarrow \infty 
\]
and
\[
D_{\mu, r, \| \cdot \|_{2 }}^{\alpha }(R) \sim 
\left( \frac{\alpha }{ \alpha - 1} \cdot \frac{c_{2}}{R}  \right)^{\gamma  r} e^{-\frac{\alpha - 1}{\alpha }R}
\text{ as } R \rightarrow \infty 
\]
for every $\alpha \in {} ]1, \infty [$.
\smallskip \\
Case 2: there is a non-unique minimum among $H_{1},..,H_{d}$. \\
Because the one-dimensional case has been already treated in case 1 we can assume w.l.o.g.
that $d \geq 2$. If $d \geq 3$, then the asymptotical order of $b_{\mu , \| \cdot \|_{\infty }}( \cdot )$
is not yet completely determined, even if all $H_{i}$ 
are equal. (cf. \cite{ref_bib_dunker} and the references therein). 
If $d=2$, then $H_{1}=H_{2}=H$ and we have
\[
b_{\mu , \| \cdot \|_{\infty }}( s ) \approx (1/s)^{1/H} \left( \log ( 1/s ) \right)^{1 + 1/H} 
\text{ as } s \rightarrow 0 
\]
(cf. \cite[Theorem 5.2.]{ref_bib_bel}, see also \cite{ref_bib_tala} for the case $H=1/2$). 
Remark \ref{ref_rem_approx}
implies
\[
b_{\mu , \| \cdot \|_{\infty }}^{-1}(R) \approx  
R^{-H} \left( \log ( R ) \right)^{H+1}
\text{ as } R \rightarrow \infty .
\]
Corollary \ref{ref_coro_imme} yields
\[
D_{\mu, r, \| \cdot \|_{\infty }}^{\infty }(R) \lessapprox ( R^{-H} \left( \log ( R ) \right)^{H+1} )^{r} e^{-R}
\text{ as } R \rightarrow \infty .
\]
From \cite[Corollary 1.3]{ref_bib_Graf_Luschgy_3} we know that
\[
D_{\mu, r, \| \cdot \|_{\infty }}^{0}(R) \approx ( R^{-H} \left( \log ( R ) \right)^{H+1} )^{r} 
\text{ as } R \rightarrow \infty .
\]
If $d \geq 2$ and $H_{i}=1/2$ for every $i=1,..,d$ we know (cf. \cite{ref_bib_csaki}) that
\[
b_{\mu , \| \cdot \|_{2 }}( s ) \sim c_{d} (1/s)^{2} \left( \log ( 1/s ) \right)^{2d-2} \text{ as } 
s \rightarrow 0
\]
with $c_{d}=2^{d-2}/\left( \sqrt{2} \pi^{d-1} (d-1)! \right)$.
Lemma \ref{ref_lemm_invert} implies
\[
b_{\mu , \| \cdot \|_{2 }}^{-1}( R ) \sim c_{d}^{1/2} 
2^{-(d-1)} R^{-1/2} \left( \log (R) \right)^{d-1}
\text{ as } R \rightarrow \infty .
\]
Corollary \ref{ref_coro_mappsdfre} yields
\[
D_{\mu, r, \| \cdot \|_{2 }}^{\infty }(R) \sim 
( c_{d}^{1/2} 2^{-(d-1)} R^{-1/2} \left( \log (R) \right)^{d-1}  )^{r} e^{-R}
\text{ as } R \rightarrow \infty .
\]
From \cite[Corollary 1.3]{ref_bib_Graf_Luschgy_3} we obtain 
\[
D_{\mu, r, \| \cdot \|_{2 }}^{0 }(R) \approx ( R^{-1/2} \left( \log ( R ) \right)^{d-1} )^{r} 
\text{ as } R \rightarrow \infty .
\]
Moreover we know (cf. \cite{ref_bib_luschg}, relation (3.13)) that
\begin{equation}
\label{ref_equ_dmue2}
D_{\mu, 2, \| \cdot \|_{2 }}^{\alpha }(R) \sim  (b_{d}  R^{-1/2} \left( \log ( R ) \right)^{d-1} )^{2}
\text{ as } R \rightarrow \infty
\end{equation}
with $b_{d} = \sqrt{2} / ( \pi^{d} (d-1)! )$ and $\alpha = 0$. 
From \cite[Theorem 2.2.]{ref_bib_luschg} and \cite[Theorem 1.1.]{ref_bib_Graf_Luschgy_4} 
we deduce that (\ref{ref_equ_dmue2}) also holds for $\alpha = 1$. 
In view of Remark \ref{ref_rem_opt_quant} relation (\ref{ref_equ_dmue2}) is also true 
for all $\alpha \in {} ]0,1[$.
As in Case 1 
the asymptotical order of $D_{\mu, r}^{\alpha }( R )$ for large $R$ and $\alpha > 1$ is 
different from the one for $\alpha \in [0,1]$. 
It can be determined by applying Corollary \ref{ref_coro_mappsdfrexx}.
We get
\[
D_{\mu, r, \| \cdot \|_{2 }}^{\alpha }(R) \sim 
\left( c_{d}^{1/2} 2^{-(d-1)} \left(\frac{\alpha - 1}{\alpha } R\right)^{-1/2} 
\left( \log \left(\frac{\alpha - 1}{\alpha } R\right) \right)^{d-1}  \right)^{r} 
e^{-\frac{\alpha - 1}{\alpha } R} 
\]
as $R \rightarrow \infty$ for every $\alpha \in {} ]1, \infty [$.

\subsection{L\'{e}vy fractional Brownian motion}

The L\'{e}vy fractional Brownian motion of order $H \in {} ]0,1[$ is 
a centered Gaussian process defined by
\[
X_{0}=0, \quad \mathbb{E} ( ( X_{t} - X_{s} )^{2} ) = \|t-s\|^{2H } \text{ for } s,t \in [0,1]^{d},
\]
if $\| \cdot \|$ denotes the Euclidean norm on $\mathbb{R}^{d}$. 
For this stochastic process we have
\[
b_{\mu , \| \cdot \|_{\infty }}( s ) \approx (1/s)^{d/H} \text{ as } s \rightarrow 0
\]
(cf. \cite{ref_bib_shao}). 
Remark \ref{ref_rem_approx}
yields 
\[
b_{\mu , \| \cdot \|_{\infty }}^{-1}( R ) \approx (1/R)^{H/d} \text{ as } R \rightarrow \infty .
\]
For $\alpha > 1$ Corollary \ref{ref_coro_imme} and Lemma \ref{ref_lemm_conj22} implies
\[
D_{\mu, r, \| \cdot \|_{\infty }}^{\alpha }(\frac{\alpha }{\alpha - 1}R) \leq
D_{\mu, r, \| \cdot \|_{\infty }}^{\infty }(R) \lessapprox ( R^{-H/d}  )^{r} e^{-R}
\text{ as } R \rightarrow \infty .
\]
Applying \cite[Corollary 1.3]{ref_bib_Graf_Luschgy_3} we obtain 
\[
D_{\mu, r, \| \cdot \|_{\infty }}^{0 }(R) \approx ( R^{-H/d}  )^{r} 
\text{ as } R \rightarrow \infty .
\]

\subsection{$m-$times integrated Brownian Motion, Fractional Integrated Brownian Motions, 
$m$-integrated Brownian sheet}

For $\beta > 0$ we define the centered Gaussian probability vector 
\[
X^{\beta}=(X_{t}^{\beta})_{t \in [0,1]}
\]
by
\[
X_{t}^{\beta } = \frac{1}{\Gamma (\beta ) } \int_{0}^{t} (t-s)^{\beta - 1} B_{s} ds, \qquad t \in [0,1], 
\]
where $B_{s}$ denotes Brownian motion. Since 
a $c(\beta ) \in {} ]0, \infty[$ exists, such that 
\[
b_{\mu , \| \cdot \|_{\infty }}(s) \sim c(\beta ) s^{-2/(2 \beta + 1)} \text{ as } s \rightarrow 0
\]
(cf. \cite{ref_bib_li1} and \cite{ref_bib_li2}) we deduce from Lemma \ref{ref_lemm_invert} that 
\[
b_{\mu , \| \cdot \|_{\infty }}^{-1}(R) \sim (R/c(\beta ))^{-(\beta + 1/2)} \text{ as } R \rightarrow \infty .
\]
Together with Corollary \ref{ref_coro_mappsdfre} we obtain
\[
D_{\mu, r, \| \cdot \|_{\infty }}^{\infty }(R) \sim \left( \frac{c(\beta )}{R}  \right)^{(\beta + 1/2) r} e^{-R}
\text{ as } R \rightarrow \infty .
\]
Corollary \ref{ref_coro_mappsdfrexx} yields
\[
D_{\mu, r, \| \cdot \|_{\infty }}^{\alpha }(R) \sim 
\left( \frac{\alpha}{\alpha - 1} \cdot \frac{c(\beta )}{R}  \right)^{(\beta + 1/2) r} 
e^{-\frac{\alpha - 1}{\alpha }R}
\text{ as } R \rightarrow \infty 
\] 
for every $\alpha \in {} ]1, \infty [$.
Moreover we have
\[
D_{\mu, r, \| \cdot \|_{\infty }}^{0 }(R) \approx \left( \frac{1}{R}  \right)^{(\beta + 1/2) r} 
\text{ as } R \rightarrow \infty .
\]
(cf. \cite[p. 1059]{ref_bib_Graf_Luschgy_3}).
If $\beta = m \in \mathbb{N}$, then a $c(m)>0$ exists, such that 
\[
b_{\mu , \| \cdot \|_{2}}(s) \sim c( m ) s^{-2/(2 m + 1)} \text{ as } s \rightarrow 0 
\]
(cf. \cite[Theorem 1.1]{ref_bib_chen}).
Again, Lemma \ref{ref_lemm_invert} and Corollary \ref{ref_coro_mappsdfre} are implying that
\[
D_{\mu, r, \| \cdot \|_{2}}^{\infty }(R) \sim \left( \frac{c(m )}{R}  \right)^{(m + 1/2) r} e^{-R} 
\text{ as } R \rightarrow \infty .
\]
Applying \cite[Corollary 1.3]{ref_bib_Graf_Luschgy_3} we deduce
\[
D_{\mu, r, \| \cdot \|_{2}}^{0 }(R) \approx \left( \frac{1}{R}  \right)^{(m + 1/2) r} \text{ as } R \rightarrow \infty .
\]
Moreover we know (cf. \cite{ref_bib_luschg}, relation (3.7)) that
\begin{equation}
\label{ref_equ_dmue3}
D_{\mu, 2, \| \cdot \|_{2 }}^{\alpha }(R) \sim  \left( \frac{c_{0}(m)}{R}  \right)^{(m + 1/2) r}
\text{ as } R \rightarrow \infty
\end{equation}
with a $c_{0}(m) \in {} ]0, \infty[$ and $\alpha = 0$. 
From \cite[Theorem 2.2.]{ref_bib_luschg} and \cite[Theorem 1.1.]{ref_bib_Graf_Luschgy_4} 
we deduce that (\ref{ref_equ_dmue3}) also holds for $\alpha = 1$. 
In view of Remark \ref{ref_rem_opt_quant} relation (\ref{ref_equ_dmue3}) is also true 
for all $\alpha \in {} ]0,1[$.
If $\alpha \in {} ]1,\infty [$, then Corollary \ref{ref_coro_mappsdfrexx} yields the error asymptotics.
We deduce
\[
D_{\mu, r, \| \cdot \|_{2}}^{\alpha }(R) \sim 
\left( \frac{\alpha}{ \alpha - 1} \frac{c(m )}{ R}  \right)^{(m + 1/2) r} 
e^{-\frac{\alpha - 1}{\alpha } R} 
\text{ as } R \rightarrow \infty 
\]
for every $\alpha \in {} ]1,\infty [$.
Results for small ball asymptotics of more general $m-$times 
integrated Brownian motions can be found in \cite{ref_bib_gao}
and \cite{ref_bib_naza}.

Now let $m \in \mathbb{N}$ and $(B_{t})_{t \in [0,1]^{d}}$ be a $d-$dimensional Brownian sheet, i.e.
$(B_{t})$ is a centered Gaussian measure characterized through the covariance function
\[
\mathbb{E} ( B_{s} B_{t} ) = \prod_{j=1}^{d} \min ( s_{j}, t_{j} ) 
\]
for $s=(s_{1},..,s_{d}) \in [0,1]^{d}$ and $t=(t_{1},..,t_{d}) \in [0,1]^{d}$. The $m-$integrated Brownian
sheet $(X_{t})_{t \in \mathbb{R}^{d}}$ is now defined by
\[
X_{m}(t) = \int_{0}^{t_{1}} \cdots \int_{0}^{t_{d}} \prod_{j=1}^{d} \frac{(t_{j}-u_{j})^{m}}{m !} B(du_{1},..,du_{d}).
\]
For this process a $c=c(m,d)>0$ exists, such that
\[
b_{\mu , \| \cdot \|_{2}}(s) \sim c s^{-2/(2 m + 1)} \left( \log (1/s) \right)^{\frac{(d-1)(2m+2)}{2m+1}} 
\text{ as } s \rightarrow 0 
\]
(cf. \cite[Corollary 5.2]{ref_bib_fill}).
Lemma \ref{ref_lemm_invert} yields
\[
b^{-1}_{\mu , \| \cdot \|_{2}}(R) \sim  
\left( \frac{c}{R} \right)^{m+1/2}  \left( \frac{\log (R)}{m+1/2} \right)^{(d-1)(m+1)}
\text{ as } R \rightarrow \infty .
\]
Corollary \ref{ref_coro_mappsdfre} implies 
\[
D_{\mu, r, \| \cdot \|_{2 }}^{\infty }(R) \sim 
\left(  \left( \frac{c}{R} \right)^{m+1/2}  \left( \frac{\log (R)}{m+1/2} \right)^{(d-1)(m+1)}  
\right)^{r} e^{-R} \text{ as } R \rightarrow \infty .
\]
On the other hand we deduce from \cite[Corollary 1.3]{ref_bib_Graf_Luschgy_3} that
\[
D_{\mu, r, \| \cdot \|_{2 }}^{0 }(R) \approx \left(  R^{-(m + 1/2)} \left( \log (R) \right)^{(d-1)(m+1)}  \right)^{r}
\text{ as } R \rightarrow \infty .
\] 
In case of $\alpha > 1$ we obtain sharp asymptotics for the optimal quantization error by
Corollary \ref{ref_coro_mappsdfrexx}.
We have
\[
D_{\mu, r, \| \cdot \|_{2 }}^{\alpha }(R) \sim 
\left(  \left( \frac{\alpha}{\alpha  - 1} \frac{c}{R} \right)^{m+1/2}  
\left( \frac{\log (\frac{\alpha - 1}{\alpha } R)}{m+1/2} \right)^{(d-1)(m+1)}  \right)^{r} 
e^{-\frac{\alpha - 1}{\alpha } R}  
\]
as $R \to \infty$ for every $\alpha \in {} ]1, \infty[$.

\subsection{Fractional Ornstein-Uhlenbeck Processes}

Let $\gamma > 0$ and $H \in {} ]0,2[$.
Let us consider the centered stationary fractional 
Ornstein-Uhlenbeck process, which is a Gaussian process defined by the covariance function
\[
\mathbb{E} X^{H}_{t} X^{H}_{s} = e^{- \gamma | t-s |^{H}}, \qquad t,s \in [0,1].
\]
Since 
\[
b_{\mu , \| \cdot \|_{\infty}}(s) \approx s^{-2/H} \text{ as } s \rightarrow 0
\]
(cf. \cite[Theorem 2.1]{ref_bib_monrad})
and
\[
b_{\mu , \| \cdot \|_{p}}(s) \approx s^{-2/H} \text{ as } s \rightarrow 0
\]
(cf. \cite[p. 1061]{ref_bib_Graf_Luschgy_3})
we deduce from Remark \ref{ref_rem_approx} and Corollary \ref{ref_coro_imme}
together with Lemma \ref{ref_lemm_conj22} for $\alpha > 1$ that
\[
D_{\mu, r, \| \cdot \|_{\infty }}^{\alpha }(\frac{\alpha }{\alpha - 1}R) \leq
D_{\mu, r, \| \cdot \|_{\infty }}^{\infty }(R) \lessapprox \left(   R^{- H / 2}  \right)^{r} e^{-R}
\text{ as } R \rightarrow \infty 
\]
and
\[
D_{\mu, r, \| \cdot \|_{p }}^{\alpha }(\frac{\alpha }{\alpha - 1}R) \leq
D_{\mu, r, \| \cdot \|_{p }}^{\infty }(R) \lessapprox \left(   R^{- H / 2}  \right)^{r} e^{-R} 
\text{ as } R \rightarrow \infty .
\]
Moreover (cf. \cite{ref_bib_Graf_Luschgy_3})
\[
D_{\mu, r, \| \cdot \|_{\infty }}^{0 }(R) \approx \left(   R^{- H / 2}  \right)^{r} 
\text{ as } R \rightarrow \infty .
\]
and
\[
D_{\mu, r, \| \cdot \|_{p }}^{0 }(R) \approx \left(   R^{- H / 2}  \right)^{r} 
\text{ as } R \rightarrow \infty .
\]
If $H = 1$, then we have the standard Ornstein-Uhlenbeck process which can also be defined 
as the solution of a stochastic differential equation. From this special case we can also
generalize the standard Ornstein-Uhlenbeck process to Gaussian diffusions, defined as a solution
of a certain stochastic differential equation. Asymptotic small ball probabilities for such processes were
derived by Fatalov \cite{ref_bib_fatalov}.
For results about the asymptotics of the optimal quantization error for 
such diffusions and $\alpha \in \{ 0, 1\}$
the reader is referred to Dereich \cite{ref_bib_dereich2, ref_bib_dereich3} 
resp. Luschgy and Pag\`es \cite{ref_bib_luschgy, ref_bib_luschgy2}.
The optimal quantization of Fractional Ornstein-Uhlenbeck Processes with higher 
dimensional index space has been discussed by Luschgy and Pag\`es \cite{ref_bib_luschg}.
Once again we observe the change in the asymptotical order of the high rate asymptotics
of the optimal quantization error, if the entropy index $\alpha$ becomes larger than $1$.

\begin{remark}
Taking the sum $Z=X+Y$ of two not necessarily independent joint Gaussian random vectors $X,Y$
it is possible to determine the asymptotical order of the small ball probability of $Z$, if
this order is known for $X$ and $Y$ (cf. \cite[Theorem 2.1]{ref_bib_el_nouty2}). 
Moreover, small ball probabilities of fractional mixtures of fractional Gaussian measures are
investigated by El-Nouty \cite{ref_bib_el_nouty, ref_bib_el_nouty2} .
\end{remark}

\subsection{Slepian Gaussian fields}

Let $a=(a_{1},..,a_{d}) \in {} ]0,\infty [^{d}$.
We consider the centered Gaussian process $(X_{t})_{t \in [0,1]^{d}}$ characterized by the 
covariance function
\[
\mathbb{E}(X_{t}X_{s}) = \prod_{i=1}^{d} \max ( 0, a_{i} - | s_{i} - t_{i} | ) .
\]
For this process we have
\[
b_{\mu , \| \cdot \|_{\infty }}(s) \approx (1/s)^{2} ( \log ( 1/s ) )^{3} \text{ as } s \rightarrow 0
\]
(cf. \cite[Theorem 1.1]{ref_bib_gao_li}) and
\[
b_{\mu , \| \cdot \|_{2 }}(s) \approx (1/s)^{2} ( \log ( 1/s ) )^{2d-2} \text{ as } s \rightarrow 0
\]
(cf. \cite[Theorem 1.1]{ref_bib_gao_li}).
Thus we obtain from Remark \ref{ref_rem_approx}, Corollary \ref{ref_coro_imme} 
together with Lemma \ref{ref_lemm_conj22} for $\alpha > 1$ that
\[
D_{\mu, r, \| \cdot \|_{\infty }}^{\alpha }(\frac{\alpha }{\alpha - 1}R) \leq
D_{\mu, r, \| \cdot \|_{\infty }}^{\infty }(R) \lessapprox \left(   R^{-1/2} (\log (R) )^{3/2}  \right)^{r} e^{-R}
\text{ as } R \rightarrow \infty 
\]
and
\[
D_{\mu, r, \| \cdot \|_{2 }}^{\alpha }(\frac{\alpha }{\alpha - 1}R) \leq
D_{\mu, r, \| \cdot \|_{2 }}^{\infty }(R) \lessapprox \left(   R^{-1/2} (\log (R) )^{d-1}  \right)^{r} e^{-R} 
\text{ as } R \rightarrow \infty .
\]
Moreover (cf. \cite{ref_bib_Graf_Luschgy_3})
\[
D_{\mu, r, \| \cdot \|_{\infty }}^{0 }(R) \approx \left(   R^{-1/2} (\log (R) )^{3/2}  \right)^{r} 
\text{ as } R \rightarrow \infty 
\]
and
\[
D_{\mu, r, \| \cdot \|_{2 }}^{0 }(R) \approx \left(   R^{-1/2} (\log (R) )^{d-1}  \right)^{r} 
\text{ as } R \rightarrow \infty .
\]
Finally, also this class of Gaussian processes shows the change in the optimal quantization error asymptotics as 
$\alpha$ increases.


\end{document}